\providecommand{\keywords}[1]{\textit{Keywords: } #1}
\tikzset{
  treenode/.style = {align=center, inner sep=0pt, text centered,
    font=\sffamily},
  arn_n/.style = {treenode, circle, black, font=\sffamily\bfseries, draw=black,
    fill=white, text width=1.5em},
  arn_x/.style = {treenode, rectangle, draw=black,
    minimum width=0.5em, minimum height=0.5em},
  arn_y/.style = {draw, shape=circle, fill=black}
}
\theoremstyle{plain}
\newtheorem{theorem}{Theorem}
\newtheorem{claim}[theorem]{Claim}
\newtheorem{lemma}[theorem]{Lemma}
\newtheorem{question}[theorem]{Question}
\theoremstyle{definition}
\renewenvironment{proof}[1][\proofname]{\par
  \pushQED{\qed}%
  \normalfont \topsep6\p@\@plus6\p@\relax
  \trivlist
  \item[\hskip\labelsep
        \itshape
    #1\@addpunct{.}]\mbox{}\\*
}{
  \popQED\endtrivlist\@endpefalse
}
\def\moverlay{\mathpalette\mov@rlay}
\def\mov@rlay#1#2{\leavevmode\vtop{%
   \baselineskip\z@skip \lineskiplimit-\maxdimen
   \ialign{\hfil$\m@th#1##$\hfil\cr#2\crcr}}}
\newcommand{\charfusion}[3][\mathord]{
    #1{\ifx#1\mathop\vphantom{#2}\fi
        \mathpalette\mov@rlay{#2\cr#3}
      }
    \ifx#1\mathop\expandafter\displaylimits\fi}
\let\orgdescriptionlabel\descriptionlabel
\renewcommand*{\descriptionlabel}[1]{%
	\let\orglabel\label
	\let\label\@gobble
	\phantomsection
	\edef\@currentlabel{#1}%
	\let\label\orglabel
	\orgdescriptionlabel{#1}%
}
\newcommand{\cupdot}{\charfusion[\mathbin]{\cup}{\cdot}}
\DeclareMathOperator*{\C}{\mathcal{C}}
\DeclareMathOperator*{\G}{\mathcal{G}}
\DeclareMathOperator*{\B}{\mathcal{D}}
\title{The poset on connected graphs is Sperner}
\author{
  Stephen G.Z. Smith\thanks{Department of Mathematical Sciences, University of Memphis, Memphis, TN 38152, USA.
  \emph{email}: \textbf{sgsmith1@memphis.edu}}
  \and
  Istv\'{a}n Tomon\thanks{Department of Pure Mathematics and Mathematical Statistics, University of Cambridge, Wilberforce Road, Cambridge
   CB3\thinspace0WB, UK.
  \emph{email}: \textbf{i.tomon@cam.ac.uk}}
}
\begin{document}

\maketitle

\begin{abstract}
Let $\C$ be the set of all connected graphs on vertex set $[n]$. Then $\C$ is endowed with the following natural partial ordering: for
$G,H\in \C$, let $G\leq H$ if $G$ is a subgraph of $H$. The poset $(\C,\leq)$ is graded, each level containing the connected graphs with 
the same number of edges. We prove that $(\C,\leq)$ has the Sperner property, namely that the largest antichain of $(\C,\leq)$ is 
equal to its largest sized level. This answers a question of Katona.
\end{abstract}

\keywords{Sperner's Theorem, Connected Graphs, Posets}

\section{Introduction}

Let $(P,\leq)$ be a partially ordered set (poset). We only consider partially ordered sets with finitely many elements. A \emph{chain} in $P$ is 
a set $C\subset P$ of pairwise comparable elements. An \emph{antichain} $A\subset P$ is a set of pairwise incomparable elements. The poset 
$(P,\leq)$ is \emph{graded} if there exists a partition of $P$ into subsets $A_{0},...,A_{m}$ such that $A_{0}$ is the set of minimal elements
of $P$, and whenever $x\in A_{i}$ and $y\in A_{j}$ with $x<y$ and there is no $z\in P$ with $x<z<y$, then we have $j=i+1$. If such a 
partition exists, it is unique and the sets $A_{0},...,A_{m}$ are the \emph{levels} of $P$.

A graded poset $(P,\leq)$ is \emph{Sperner} if the largest antichain in $P$ is the largest sized level.

Let $m$ be a positive integer, $[m]=\{1,...,m\}$. The Boolean lattice $2^{[m]}$ is the power set of $[m]$ ordered by inclusion, and 
$[m]^{(k)}=\{A\subset [m]: |A|=k\}$. By the well known theorem of Sperner \cite{sperner}, the poset $(2^{[m]},\subset)$ is Sperner, the 
largest antichains being equal to $[m]^{(\lfloor m/2 \rfloor)}$ and $[m]^{(\lceil m/2\rceil)}$. The question whether certain posets are 
Sperner is widely studied. For a short list of such results, see \cite{anderson}. In this paper, we investigate the Sperner property of the following poset.

 Let $n$ be a positive integer and let $\C$ denote the set of all connected graphs on vertex set $[n]$. (In other words, $\C$ is the family of labeled connected graphs on $n$ vertices.) The family $\C$ is endowed with the following natural partial ordering: for 
 $G,H\in \C$, let $G\leq H$ if $G$ is a subgraph of $H$, or more formally, if $E(G)\subset E(H)$. When there is no risk of confusion, we shall simply write $\C$ when referring to the poset
 $(\C,\leq)$. Observe that $\C$ is graded, the levels of $\C$ being the families $\C^{(k)}=\{G\in \C: |E(G)|=k\}$ for $k=n-1,...,m$. The following question originates from Katona \cite{katona}.

 \begin{question}\label{question}
 Is $(\C,\leq)$ Sperner?
 \end{question}

 We prove that the answer is yes. More precisely, setting $m=\binom{n}{2}$ and  $M=\lceil m/2\rceil$, the main result of this paper is the following theorem.

\begin{theorem}\label{mainthm}
If $n$ is sufficiently large, the unique largest antichain in $\C$ is $\C^{(M)}$.
\end{theorem}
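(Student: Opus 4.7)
The plan is to prove Theorem~\ref{mainthm} via Dilworth's theorem by exhibiting a chain decomposition of $\G$ into exactly $|\G^{(M)}|$ chains; such a decomposition forces any antichain to have size at most $|\G^{(M)}|$. I would construct these chains greedily through matchings between consecutive levels, with every chain passing through $\G^{(M)}$. A preliminary step is to establish the unimodality of the level sequence: $|\G^{(k)}|$ is non-decreasing for $k \le M$ and non-increasing for $k \ge M$. This should follow by combining the unimodality of $\binom{m}{k}$ with the monotonicity of the connectivity density $|\G^{(k)}|/\binom{m}{k}$ in $k$ (a routine coupling argument), with a little extra care needed in the range $k > M$ where the two factors pull in opposite directions.

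The upward half of the chain construction is routine. For each $k$ with $n-1 \le k < M$, I would produce an injection $\phi_k \colon \G^{(k)} \hookrightarrow \G^{(k+1)}$ with $G \subset \phi_k(G)$. Since $\G$ is an upset of $2^{[m]}$ (adding an edge preserves connectedness), the upper shadow in $\G$ of any $S \subset \G^{(k)}$ coincides with its Boolean upper shadow, and the local LYM inequality gives $|\partial^+ S| \ge |S|(m-k)/(k+1) \ge |S|$ whenever $k \le M-1$. Hall's theorem then furnishes $\phi_k$.

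The downward half is the main difficulty. For $k \ge M$, I would want an injection $\psi_k \colon \G^{(k+1)} \hookrightarrow \G^{(k)}$ with $\psi_k(H) \subset H$. Removing an edge from a connected graph can disconnect it, so a graph $H$ with $b(H)$ bridges has only $k+1 - b(H)$ valid down-neighbors in $\G^{(k)}$, and Hall's condition can fail for sets of ``bridge-heavy'' graphs. My plan is to split $\G^{(k+1)}$ into a low-bridge part, where the shadow is essentially Boolean and Hall holds via the local LYM argument, and a much smaller high-bridge part handled by an explicit rule (for instance, mapping each $H$ to the graph obtained by deleting a canonically chosen non-bridge edge, and verifying injectivity together with disjointness from the low-bridge image). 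The enabling estimate is that for $k$ near $M$ the number of connected graphs with many bridges is exponentially smaller than $|\G^{(k+1)}|$, since a random $k$-edge graph with $k \sim m/2$ is connected and bridgeless with probability $1-o(1)$. The main technical obstacle is converting this probabilistic slack into a rigorous Hall-type statement without creating collisions between the two partial matchings, which is presumably why the theorem requires $n$ sufficiently large.
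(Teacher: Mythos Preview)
Your overall strategy (chain decomposition via Hall matchings between consecutive levels) and your upward half coincide with the paper's Lemma~\ref{trivial} and Lemma~\ref{smalllevels}; the unimodality step is unnecessary, since the matchings alone yield the chain decomposition without ever comparing level sizes.

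The genuine gap is in the downward half for $M<k<M+n$. You propose to partition the level once into low-bridge and high-bridge graphs, match the former via Hall, and the latter by an explicit ``delete a canonical non-bridge edge'' rule. This rule has no reason to be injective (two distinct bridge-containing graphs can easily share such a down-neighbor), and since the Hall matching on the low-bridge part is nonconstructive you have no mechanism to keep the two images disjoint; the fact that bridge-heavy graphs are globally rare in $\G^{(k+1)}$ does not by itself yield the required disjointness, nor does it verify Hall's condition for arbitrary subsets. The paper takes a structurally different route (Theorem~\ref{middlelevels}): it verifies $|\triangle(X)|\ge|X|$ for \emph{every} $X\subset\G^{(k)}$ by splitting each such $X$ into $Y=X\cap\B$ and $Z=X\setminus Y$ and combining three quantitative estimates---Lov\'asz's shadow bound on $\triangle(Y)=\partial(Y)$, a lower bound on $|\triangle(Y)\cap\B|$ (Lemma~\ref{exp1}, driven by the count in Lemma~\ref{numof2conn} of $2$-edge-connected graphs with prescribed $|R(G)|$), and an expansion bound for $\triangle(Z)$ (Lemma~\ref{exp2}, via the skeleton analysis of Lemma~\ref{tech})---through the elementary inequality of Lemma~\ref{technical}. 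The controlling parameter throughout is $|R(G)|$, the number of edges whose removal destroys $2$-edge-connectivity, not the bridge count; your proposal neither identifies this parameter nor contains any analogue of Lemmas~\ref{numof2conn}--\ref{exp2}.
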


Let us make a remark about how this result compares to Sperner's theorem \cite{sperner}. Let $\G$ be the set of all graphs on vertex set $[n]$ and extend the ordering $\leq$ to $\G$ in the obvious way. Also, for $k=0,...,m$, let 
$\G^{(k)}$ be the set of graphs in $\G$ with $k$ edges. Observe that $(\G,<)$ is isomorphic to $(2^{[m]},\subset)$, hence $(\G,<)$ is Sperner. Note that $\C$ is a very dense subset of $\G$. As we shall see in Section \ref{sect:connected}, the size of $\C$ is at least $2^{m}(1-2^{-n-o(n)})$. This corresponds to the well known statement that a graph chosen uniformly at random among all graphs with $n$ vertices (that is an element of $G(n,1/2)$ in the Erd\H{o}s-R\'{e}nyi random graph model) is disconnected with a probability that is exponentially small.


\bigskip

A problem similar to Question \ref{question} has been considered in a paper of Jacobson, K\'{e}zdy and Seif \cite{induced}. Let $G$ be a
connected graph and let $(C(G),<)$ be the poset, whose elements are the connected, vertex-induced subgraphs of $G$, and $H<H'$ if $H$
is an induced subgraph of  $H'$. In \cite{induced}, it was proved that this poset need not be Sperner, even if $G$ is a tree.

\bigskip

This paper is organized as follows. In Section \ref{sect:prelim}, we discuss our notation and prove a few technical results. In Section \ref{sect:connected}, we shall prove various bounds on the number of connected graphs with certain properties. 
These bounds provide us with some of the ingredients needed for the proof of Theorem \ref{mainthm} in Section \ref{sect:matching}. In Section \ref{sect:problems}, we 
propose some open problems.

\section{Preliminaries}\label{sect:prelim}

Let us say a few words about our notation, which is mostly conventional. If $G$ is a graph, $V(G)$ is the vertex set of $G$, $E(G)$ is the set of its edges, and $e(G)=|E(G)|$. If $U\subset V(G)$, $G[U]$ denotes the subgraph of $G$ induced on the vertex set $U$. If $F\subset E(G)$, then $G-F$ is the 
graph on vertex set $V(G)$ and edge set $E(G)\setminus F$. If $e\in E(G)$, we simply write $G-e$ instead of $G-\{e\}$.

For the sake of readability, we use the notation $\textrm{exp}_2(x)= 2^x$,  when necessary. Furthermore, $\log$ denotes base $2$ logarithm. 

Our paper contains a lot of technical computations that are made more convenient by the following extension of the binomial coefficient. We define the binomial coefficient $\binom{x}{k}$ for any $k\in \mathbb{N}$, $x\in \mathbb{R}$ such that 

	$$\binom{x}{k}=\begin{cases} \frac{x(x-1)...(x-k+1)}{k!} &\mbox{if }  k \leq x,\\
	0 & \mbox{otherwise}. \end{cases}$$
	
	We collect some of the simple properties of $\binom{x}{k}$ in the following lemma.
	
\begin{lemma}\label{binom} Let $k\in \mathbb{N}$, $x\in \mathbb{R}$.
	\begin{description}
		\item[(i)\label{app1}] If $x\geq k$, we have $\binom{x}{k-1}/\binom{x}{k}=\frac{k}{x-k+1}$.
		\item[(ii)\label{app2}] Let $\delta$ be a non-negative integer and suppose that $k\leq x\leq 2k-\delta$. Then $\binom{x+\delta}{k}\geq2^{\delta}\binom{x}{k}$.
		\item[(iii)\label{app4}] $\binom{x}{k}\leq\binom{x+1}{k}$ and $\binom{x}{k}\leq\binom{x+1}{k+1}$.
	\end{description}
\end{lemma}

\begin{proof}
	\ref{app1} and \ref{app4} easily follows from the definition. 
	
	Now let us prove \ref{app2}. If we prove the case $\delta=1$, that is $\binom{x+1}{k}\geq 2\binom{x}{k}$ for $k\leq x\leq 2k-1$, the result follows by induction on $\delta$. But in this case, we have $\binom{x+1}{k}/\binom{x}{k}=(x+1)/(x-k+1)\geq 2$.
	
\end{proof}

 We remark that by continuity, for any fixed positive integer $k$ and a real number $r\geq 1$, there is a unique $x\in \mathbb{R}$ such that $r=\binom{x}{k}$.
 
 \bigskip

Throughout this paper, we shall also use the following simple inequalities.

\begin{lemma}\label{squares}
	Let $a_{1},...,a_{s}$ be positive integers and let $a_{1}+...+a_{s}=n$. We have
	\begin{align}\label{square1}\tag{i}
	\sum_{i=1}^{s}\binom{a_{i}}{2}\leq \binom{n-s+1}{2},
	\end{align}
	and
	\begin{align}\label{square2}\tag{ii}
	\sum_{1\leq i<j\leq s}a_{i}a_{j}\geq (n-s+1)(s-1)+\binom{s-1}{2}.
	\end{align}
	Also, if $a_{i}\leq k$ for $i\in [s]$, where $n/2<k\leq n-s+1$, then
	\begin{align}\label{square3}\tag{iii}
	\sum_{i=1}^{s}\binom{a_{i}}{2}\leq \binom{n-k-s+2}{2}+\binom{k}{2},
	\end{align}
	and
	\begin{align}\label{square4}\tag{iv}
	\sum_{1\leq i<j\leq s}a_{i}a_{j}\geq k(n-k).
	\end{align}
\end{lemma}

\begin{proof}
	 The function $f(x)=x^{2}$ is convex, so $\sum_{i=1}^{s}a_{i}^2$ attains its maximum under the conditions $\sum_{i=1}^{s}a_{i}=n$ and $a_{i}\in \mathbb{Z}^{+}$ when $a_{1}=...=a_{s-1}=1$ and $a_{s}=n-s+1$. Note that the left hand side of (\ref{square1}) is $\sum_{i=1}^{s}a_{i}^{2}/2-n/2$, and the left hand side of (\ref{square2}) is $(n^{2}-\sum_{i=1}^{s}a_{i}^{2})/2$, while the right hand sides of these inequalities are the respective values when $a_{1}=...=a_{s-1}=1$ and $a_{s}=n-s+1$.
	 
	 For the inequalities (\ref{square3}) and (\ref{square4}), notice that with the additional condition that $a_{i}\leq k$, $\sum_{i=1}^{s}a_{i}^2$ attains its maximum when $a_{1}=...=a_{s-2}=1$, $a_{s-1}=n-k-s+2$, $a_{s}=k.$ The right hand side of (\ref{square3}) is exactly $\sum_{i=1}^{s}\binom{a_{i}}{2}$ with these values inserted. On the other hand, we have 
	 	$$\sum_{1\leq i<j\leq s}a_{i}a_{j}\geq a_{s}(a_{1}+...+a_{s-1})=a_{s}(n-a_{s})=k(n-k),$$
	 which proves (\ref{square4}).
	 
\end{proof}

\section{Connectivity of graphs}\label{sect:connected}

In this section, we investigate the following problems. How many edges can a graph $G$ have, whose removal destroys the connectivity, or $2$-edge-connectivity of $G$? Also, what is the number of $2$-edge-connected graphs $G$ on vertex set $[n]$ in which there are exactly $r$ edges, whose removal destroys the $2$-edge-connectivity of $G$?

Let us start this section with the following well known result about the number of disconnected graphs. For completeness, we shall provide a short proof. A stronger form of this result can be found in  \cite{graphtheory}, p. 138 as well.

\begin{lemma}\label{disc}
The number of disconnected graphs on vertex set $[n]$ is less than $\textit{exp}_2\Bigl(\binom{n-1}{2}+o(n)\Bigr)$.
\end{lemma}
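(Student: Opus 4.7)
The plan is to count disconnected graphs by conditioning on the connected component containing vertex $1$. For any disconnected graph $G$ on $[n]$, let $C(G)\subset [n]$ denote the vertex set of this component, and let $k=|C(G)|$, so $1\le k\le n-1$. Since no edge of $G$ crosses between $C(G)$ and $[n]\setminus C(G)$, the graph $G$ is determined by: (i) the set $C(G)$, which contains vertex $1$ and $k-1$ other vertices from $[n]\setminus\{1\}$, (ii) a connected graph on $C(G)$, and (iii) an arbitrary graph on $[n]\setminus C(G)$.

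This gives the bound
$$|\{\text{disconnected graphs on }[n]\}|\;\le\;\sum_{k=1}^{n-1}\binom{n-1}{k-1}\,2^{\binom{k}{2}+\binom{n-k}{2}}.$$
The exponent $\binom{k}{2}+\binom{n-k}{2}$ is a convex function of $k$ on $[1,n-1]$ (this is the $s=2$ case of Lemma \ref{squares}), so it is maximized at the endpoints $k=1$ and $k=n-1$, where it equals $\binom{n-1}{2}$. The two boundary terms therefore contribute at most $n\cdot 2^{\binom{n-1}{2}}$.

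For the remaining terms, $2\le k\le n-2$, convexity again gives $\binom{k}{2}+\binom{n-k}{2}\le \binom{2}{2}+\binom{n-2}{2}=\binom{n-1}{2}-(n-3)$, while $\binom{n-1}{k-1}\le 2^{n-1}$. Hence the middle range contributes at most $(n-3)\cdot 2^{n-1}\cdot 2^{\binom{n-1}{2}-(n-3)}=O(n)\cdot 2^{\binom{n-1}{2}}$. Combining the two estimates gives a total of $2^{\binom{n-1}{2}+O(\log n)}=2^{\binom{n-1}{2}+o(n)}$, as required.

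The argument is essentially routine, so there is no real obstacle; the only care needed is to check that the convex quantity $\binom{k}{2}+\binom{n-k}{2}$ really does lose a linear-in-$n$ amount in the exponent as soon as $k$ moves away from the extremes, so that the polynomial factor $\binom{n-1}{k-1}$ coming from choosing the component cannot compensate. This is exactly why a bound of the form $2^{\binom{n-1}{2}+o(n)}$ is sharp in the exponent up to the lower-order term but no better can be read off from this crude counting.
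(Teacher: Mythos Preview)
Your proof is correct and follows essentially the same approach as the paper's. Both arguments bound the number of disconnected graphs by summing over a bipartition $(A,B)$ of $[n]$ with no edges across, use the bound $\binom{|A|}{2}+\binom{|B|}{2}\le\binom{n-1}{2}$ (respectively $\le 1+\binom{n-2}{2}$ when both parts have size $\ge 2$) from Lemma~\ref{squares}, and treat the extreme case $\min(|A|,|B|)=1$ separately; the only cosmetic difference is that you fix $A$ to be the component of vertex~$1$ rather than an arbitrary witnessing bipartition.
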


\begin{proof} A graph $G$ is disconnected if there is a partition of $[n]$ into two nonempty sets $A$ and $B$ such that there are no edges 
between $A$ and $B$. The number of disconnected graphs, where $|A|=1$ and $|B|=n-1$ is at most 
$n \cdot \mathrm{exp}_2\Bigl(\binom{n-1}{2}\Bigr)$, as we have $n$ choices for the partition $\{A,B\}$, and 
$\mathrm{exp}_2\Bigl(\binom{n-1}{2}\Bigr)$ number of different choices for the edges in $B$.

The number of disconnected graphs where $|A|,|B|\geq 2$ is at most $\mathrm{exp}_2\Bigl( n +\binom{n-2}{2}+1\Bigr)=\mathrm{exp}_2\Bigl( \binom{n-1}{2}+3\Bigr)$, as there are at most 
$\mathrm{exp}_2(n)$ number of choices for the partition $(A,B)$, and the number of ways to choose the edges inside $A$ and $B$ is at most 
$\mathrm{exp}_2\Bigl(\binom{|A|}{2}+\binom{|B|}{2}\Bigr) \leq \mathrm{exp}_2 \Bigl(\binom{n-2}{2}+1\Bigr)$.
Hence, the total number of disconnected graphs is at most $\mathrm{exp}_2 \Bigl( \binom{n-1}{2}+o(n) \Bigr)$.

\end{proof}

\bigskip

We define the \emph{block tree} of a connected graph $G$ as follows. An edge $e\in E(G)$ is a \emph{bridge}, if $G-e$ is disconnected. Let 
$B$ be the set of bridges in $G$ and let $A_{1},...,A_{t}$ be the vertex sets of the components of $G-B$. Then the block tree of $G$ is 
$Bt(G)=(B,\{A_{1},...,A_{t}\})$.

 \begin{figure}
 \centering
 \includegraphics[scale=1]{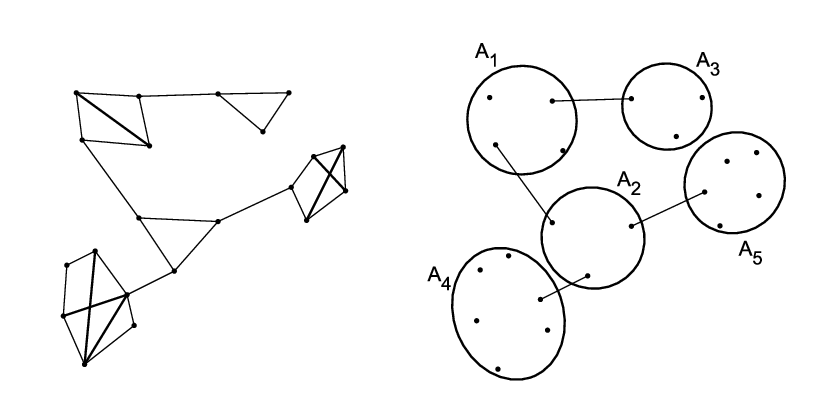}
 \caption{A graph and an illustration of its blocktree}
 \label{image1}
 \end{figure}

The following lemma lists the main properties of the block tree, which may be easily verified by the reader. 

\begin{lemma}\label{1conngraphs}
Let $G$ be a connected graph with block tree $(B,\{A_{1},...,A_{t}\})$. Then $|B|=t-1$ and $G[A_{i}]$ is $2$-edge-connected for $i\in [t]$.
\end{lemma}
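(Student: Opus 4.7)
The plan is to reduce both claims to the structure of the auxiliary multigraph $T$ obtained from $G$ by contracting each $A_i$ to a single super-vertex: I will show that $T$ is a tree on $t$ vertices whose edges correspond bijectively to the elements of $B$, so that $|B|=t-1$ is immediate from the edge count of a tree.

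First I would verify that the contraction is well-defined, i.e., every bridge $\overline{uv}\in B$ has its endpoints in distinct pieces $A_i$: if $u$ and $v$ lay in the same $A_i$, then $G-B\subseteq G-\overline{uv}$ would already contain a $u$-$v$ path through that piece, contradicting the bridge property of $\overline{uv}$. Then $T$ is connected because $G$ is connected, and $T$ is acyclic because any cycle in $T$ would lift, using paths inside the connected pieces $G[A_i]$, to a cycle in $G$ passing through some bridge, contradicting the standard fact that a bridge lies on no cycle. Hence $T$ is a tree on $t$ vertices, and $|B|=|E(T)|=t-1$.

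For the second claim, each $G[A_i]$ is connected by construction, so it suffices to show it has no bridge. Suppose for contradiction that some $e=\overline{uv}\in E(G[A_i])$ is a bridge of $G[A_i]$, separating the vertex set into parts $U\ni u$ and $V\ni v$ in $G[A_i]-e$. Since $e\notin B$, the graph $G-e$ is still connected, so it contains a $u$-$v$ path $P$. This $P$ cannot stay inside $A_i$ (that would contradict the separation $U,V$), so projecting $P$ to $T$ yields a nontrivial closed walk based at the vertex $A_i$; since $T$ is a tree, this walk must traverse some tree-edge at least twice, forcing $P$ to reuse the corresponding bridge of $G$ and contradicting that $P$ is a path.

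The main subtlety is the projection step in the last paragraph: one has to check that the sequence of $A_j$'s visited by $P$, with consecutive repetitions collapsed, is really a walk in $T$ whose edges are precisely the bridges traversed by $P$, so that a repeated tree-edge means a repeated edge of $G$. Once the bijection between $B$ and $E(T)$ from the first part is in place, this is essentially the elementary fact that every nontrivial closed walk in a tree must repeat an edge, and the rest of the argument is bookkeeping.
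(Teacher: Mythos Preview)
Your argument is correct, and in fact the paper does not supply its own proof of this lemma: it cites it as a standard fact from \cite{bollobas2} and leaves the verification to the reader. Your approach via the contracted multigraph $T$ (the bridge tree) is the standard one, and both claims follow cleanly from the fact that $T$ is a tree. The only imprecision worth flagging is that the lift of a cycle in $T$ back to $G$ is a priori only a closed walk, not a simple cycle; but since each bridge appears exactly once along that walk, removing any one of them still leaves its endpoints connected, which is already the desired contradiction. The projection step in the second part is handled correctly once you use that $T$ is simple (at most one bridge between any two pieces), so a repeated edge in the closed walk in $T$ really does force $P$ to traverse the same edge of $G$ twice.
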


If $G$ is a $2$-edge-connected graph, let $R(G)$ be the set of edges $f\in E(G)$ such that $G-f$ is not $2$-edge-connected. Lemma \ref{2connstruct} gives an upper bound on the size of $R(G)$.

\begin{lemma}\label{2connstruct}
Let $G$ be a $2$-edge-connected graph and let $H=G-R(G)$. Denote the number of components of $G-R(G)$ by $q$. Then $|R(G)|\leq 2q-2$.
\end{lemma}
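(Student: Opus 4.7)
The plan is to reduce the lemma to a combinatorial bound on an auxiliary multigraph. Let $C_1,\ldots,C_q$ be the components of $H = G - R(G)$, and form the multigraph $\tilde G$ on vertex set $\{C_1,\ldots,C_q\}$ by contracting each $C_i$ to a point and keeping all edges of $R(G)$. I first verify that no edge of $R(G)$ has both endpoints inside a single component $C_i$: such an edge together with its $2$-cut partner would have to separate its own endpoints in $G$, but these endpoints are already joined by a path inside $H \subseteq G - R(G)$, a contradiction. Thus $|V(\tilde G)| = q$ and $|E(\tilde G)| = |R(G)|$, and the lemma reduces to showing $|E(\tilde G)| \le 2|V(\tilde G)| - 2$.

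Next I establish that $\tilde G$ is \emph{minimally $2$-edge-connected}, meaning it is $2$-edge-connected and every one of its edges lies in some $2$-edge-cut. Since the edges of $H$ live inside the $C_i$'s and do not cross between them, every cut of $\tilde G$ pulls back to a cut of $G$ of the same size, so $2$-edge-connectedness of $G$ descends to $\tilde G$. For the second property, given any $e \in R(G)$ and a $2$-cut $\{e, e'\}$ of $G$, both sides of this cut are unions of components $C_i$ (by the same path-in-$H$ argument), so the cut projects to a $2$-cut of $\tilde G$ containing the image of $e$.

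The core claim is then that any minimally $2$-edge-connected multigraph $M$ on $n \ge 2$ vertices satisfies $|E(M)| \le 2n - 2$; applied to $\tilde G$ this finishes the proof. I prove it by induction on $n$. The base case $n = 2$ forces $M$ to consist of exactly two parallel edges. For $n \ge 3$, Mader's theorem supplies a vertex $v$ of degree $2$ with neighbors $u, w$: if $u = w$, I delete $v$ together with its two parallel edges; otherwise, I suppress $v$ into a new edge $uw$. In either case the resulting multigraph $M'$ has $n - 1$ vertices and two, respectively one, fewer edges, and the inductive hypothesis closes the estimate.

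The main obstacle is verifying that these reductions preserve minimal $2$-edge-connectedness. One must exhibit, for each surviving edge, a $2$-cut of $M'$ containing it. The key observation is that any $2$-cut $\{f, g\}$ of $M$ either avoids the edges at $v$ entirely (in which case $v$ and both its neighbors lie on a common side, so the cut transports unchanged into $M'$) or contains $uv$ or $vw$, in which case the new shortcut edge $uw$ can substitute for the missing edge to yield a $2$-cut $\{f, uw\}$ of $M'$; similarly $uw$ itself is placed in a $2$-cut of $M'$ extracted from an old $2$-cut of $M$ containing $uv$. The case $u = w$ requires the additional observation that any bridge of $M - v$ would propagate to a bridge of $M$, contradicting $2$-edge-connectedness.
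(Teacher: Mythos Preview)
Your reduction to the contracted multigraph $\tilde G$ matches the paper's: the paper calls this graph $K$ and records its key property as ``no chorded cycle'', which for a $2$-edge-connected multigraph is equivalent to your ``minimally $2$-edge-connected''. The paper then proves the edge bound by contracting an arbitrary cycle of $K$ and inducting on the vertex count, a self-contained argument; you instead invoke Mader's theorem to locate a degree-$2$ vertex and suppress it.

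There is a genuine gap in your inductive step when $u\neq w$: the suppressed multigraph $M'$ need \emph{not} be minimally $2$-edge-connected, because the new edge $uw$ may fail to lie in any $2$-cut of $M'$. Your extraction of a $2$-cut for $uw$ from ``an old $2$-cut of $M$ containing $uv$'' breaks down when the \emph{only} $2$-cut of $M$ through $uv$ is $\{uv,vw\}$. Concretely, let $M$ be the $4$-cycle $c_1c_2c_3c_4$ together with a new vertex $v$ joined to $c_1$ and $c_3$. One checks that $M$ is minimally $2$-edge-connected and $v$ has degree $2$; but the unique $2$-cut of $M$ containing $vc_1$ is $\{vc_1,vc_3\}$, so no $2$-cut of $M'$ containing $c_1c_3$ can be produced, and indeed $M'-c_1c_3=C_4$ is still $2$-edge-connected. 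Thus the induction hypothesis does not apply to $M'$.

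The repair is short. Your argument that every edge $f\neq uw$ of $M'$ lies in a $2$-cut of $M'$ is correct. So if $M'$ fails to be minimally $2$-edge-connected, the culprit is $uw$, meaning $M'-uw=M-v$ is $2$-edge-connected. Now for any edge $f$ of $M-v$, take a $2$-cut $\{f,g\}$ of $M'$; one cannot have $g=uw$ (else $M-v-f$ would be disconnected), so $\{f,g\}$ is a $2$-cut of $M-v$ as well. Hence $M-v$ is itself minimally $2$-edge-connected, the hypothesis applies to it, and $|E(M)|=|E(M-v)|+2\le 2(n-1)-2+2=2n-2$. (A similar check is also owed in your $u=w$ case, where you verify $2$-edge-connectedness of $M-v$ but not its minimality; there, however, minimality does hold directly.) With this patch your route works, though it leans on Mader's theorem where the paper's cycle-contraction proof is elementary and self-contained.
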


To make our proof more convenient, we shall work with \emph{multi-graphs}. A multi-graph is a graph where we allow multiple edges between a pair of vertices, but no loops. We extend the definition of a \textit{cycle} as follows: a cycle is either $2$ vertices connected by $2$ edges or a simple graph that is a cycle. A \emph{chord} in a cycle $C$ is an edge not in $E(C)$ connecting two vertices of $C$. For example, if the vertices $x$ and $y$ are connected by $3$ edges, any two edges form a cycle and the third edge is a chord of this cycle.

\begin{proof}
Let the components of $H$ be $H_{1},...,H_{q}$. Every edge in $R(G)$ connects two different components in $H$. Define the multi-graph $K$ on vertex set $[q]$ as follows: if $H_{i}$ and $H_{j}$ are connected by $l$ edges in $G$, then $i$ and $j$ are connected by $l$ edges in $K$.

Note that the graph $K$ cannot contain a cycle with a chord. Otherwise, suppose that there is a cycle with vertices $i_{1},...,i_{s}$ and a chord $i_{a}i_{b}$. Let $e\in R(G)$ be an edge connecting $H_{i_{a}}$ and $H_{i_{b}}$ in $G$. Then $H_{i_{a}}$ and $H_{i_{b}}$ are still connected by at least 2 disjoint paths in $G-e$, hence $e$ cannot be an element of $R(G)$.

Our lemma follows from the following result about multi-graphs without cycles with a chord.

\begin{claim}\label{multigraph}
	If $L$ is a multi-graph on $q$ vertices without a cycle with a chord, then $e(L)\leq 2q-2$.
\end{claim}

\begin{proof} We proceed by induction on $q$. If $q=1$, $E(L)$ is empty, so we are done. Suppose that $q>1$. If $L$ has a vertex $v$ of degree at most $2$, then let $L'=L-v$. Then $L'$ has $q-1$ vertices, at least $e(L)-2$ edges, and does not contain a chorded cycle. Hence, by induction, $e(L)-2\leq 2q-4$, which gives $e(L)\leq 2q-2$. Now suppose that every vertex of $L$ has degree at least $3$. Let $v_{1},...,v_{s}$ be the consecutive vertices of a longest path in $L$. Every neighbor of $v_{1}$ is contained in the set $\{v_{2},...,v_{s}\}$, otherwise we can find a longer path in $L$. Hence, there exist $i,j$ satisfying $2\leq i\leq j\leq s$ such that the multi-set $E(L)$ contains three different edges, $v_{1}v_{2},v_{1}v_{i}$ and $v_{1}v_{j}$. But then $v_{1},...,v_{j}$ forms a cycle, and $v_{1}v_{i}$ is a chord of this cycle.
\end{proof}

\bigskip

As $K$ does not contain a cycle with a chord and has $|R(G)|$ edges, we get $|R(G)|\leq 2q-2$. This completes the proof of Lemma \ref{2connstruct}.

\end{proof}

\bigskip
We remark that if $R(G)$ is non-empty, it has at least $2$ elements. This is true because if $e\in R(G)$, then $G-e$ contains a bridge $f$. 
But then $f\in R(G)$ as well.

\bigskip

Let $I_{r}$ be the set of $2$-edge-connected graphs $G$ such that $|R(G)|=r$, and let $I_{r}^{(k)}=I_{r}\cap \C^{(k)}$. In the next lemma, we 
give an upper bound on the size of $I_{r}^{(k)}$. Recall that $M = \lceil \binom{n}{2} / 2 \rceil$.

\begin{lemma}\label{numof2conn}
Let $\epsilon$ be a positive real number. There exists $n_{1}(\epsilon)$ such that if $n>n_{1}(\epsilon)$, the following holds. For any 
positive integers $r$ and $k$ satisfying $2\leq r\leq n$ and $M\leq k\leq M+n$, we have
$$|I_{r}^{(k)}|\leq\binom{\binom{n-r/2}{2}+\epsilon rn}{k}.$$
\end{lemma}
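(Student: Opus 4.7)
The plan is to bound $|I_r^{(k)}|$ by enumerating the structural decomposition of each $G \in I_r^{(k)}$. For such $G$, let $H = G - R(G)$ with components of vertex sets $A_1, \ldots, A_q$, and put $T = \{A_1, \ldots, A_q\}$. Lemma \ref{2connstruct} yields $q \geq r/2 + 1$, and since $G$ is connected, every component must contain an endpoint of some $R(G)$-edge, giving $q \leq 2r$. By Lemma \ref{squares}, the intra-component edge pool $P(T) := \bigcup_i \binom{A_i}{2}$ has size $|P(T)| \leq \binom{n-q+1}{2} \leq \binom{n-r/2}{2}$.

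Each $G$ is determined by: the partition $T$; a $(k-r)$-subset of $P(T)$ (the edges of $H$); and $r$ inter-component edges whose induced multigraph on $[q]$ has no chorded cycle. I will use the upper bound
$$|I_r^{(k)}| \leq \sum_T \binom{|P(T)|}{k-r} \cdot N(T),$$
where $N(T)$ counts the valid inter-edge configurations for given $T$. A crude enumeration should yield $N(T) \leq n^{O(r)}$: the number of no-chorded-cycle multigraphs on $q \leq 2r$ vertices with $r \leq 2q - 2$ edges is at most $q^{O(r)}$, and each edge has at most $n^2$ placements in $G$.

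The core task will be to control $\sum_T \binom{|P(T)|}{k-r}$. For $k$ near $M \approx m/2$ and $N \approx \binom{n-1}{2}$, the ratio $\binom{N-1}{k-r}/\binom{N}{k-r} = (N-k+r)/N$ is bounded away from $1$, so $\binom{|P(T)|}{k-r} \leq C^{c(T)} \binom{\binom{n-r/2}{2}}{k-r}$ for some constant $C < 1$, where $c(T) := \binom{n-r/2}{2} - |P(T)| \geq 0$. The extremal partitions (one part of size $n - q + 1$ and $q - 1$ singletons) have $c(T) = 0$ and number at most $\binom{n}{q-1} \leq n^{O(r)}$, while non-extremal partitions are exponentially suppressed. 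Summing the resulting geometric-type series should give
$$|I_r^{(k)}| \leq n^{O(r)} \binom{\binom{n-r/2}{2}}{k-r}.$$

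To conclude, set $A = \binom{n-r/2}{2}$ and $B = \epsilon rn$, and expand $\binom{A+B}{k-r}/\binom{A}{k-r}$ as $\prod_{i=0}^{k-r-1}(1 + B/(A-i))$. For $k$ near $m/2$, each factor is $1 + \Theta(\epsilon r/n)$, so this product is $2^{\Theta(\epsilon rn)}$, which absorbs the polynomial factor $n^{O(r)}$ once $n$ is large in terms of $\epsilon$. The hardest part will be the partition sum in the previous paragraph: carefully enumerating partitions $T$ by $c(T)$ and verifying that their polynomial growth is dominated by the geometric decay, a balance which is tightest when $r$ is small relative to $n$ and the slack $\epsilon rn$ is most modest.
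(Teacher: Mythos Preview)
Your approach is essentially the paper's: the same structural decomposition via $R(G)$ and Lemma~\ref{2connstruct}, the same bound $|P(T)|\le\binom{n-r/2}{2}$ from Lemma~\ref{squares}, and the same absorption of the $n^{O(r)}$ overcount into the $\epsilon rn$ slack via the exponential gain in the binomial coefficient. The only difference is presentational: where you package the partition sum as a single geometric decay in $c(T)$, the paper does an explicit case split --- a crude multinomial bound $s^{n}\le 2^{\epsilon rn/5}$ when $\log r<\epsilon r/5$, and for bounded $r$ a further dichotomy on whether the largest part $a_{1}$ exceeds $n-4r$ (small multinomial if so, an extra $\Theta(rn)$ of savings in $|P(T)|$ from Lemma~\ref{squares} if not) --- which is exactly what your ``non-extremal partitions are exponentially suppressed'' claim unpacks to, and what you correctly flag as the hardest step.
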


\begin{proof}
Let $q=\lceil r/2 \rceil +1$.  If $G$ is a $2$-edge-connected graph with $|R(G)|=r$, then $G-R(G)$ has at least $q$ components by  Lemma \ref{2connstruct}. We now count the number of graphs $G$ where $G-R(G)$ has exactly $s$ components. Note that $s\leq r$, otherwise the edges of $R(G)$ could not connect all the components of $G-R(G)$.

 The number of graphs $G$, for which $|R(G)|=s$, and where the components in $G-R(G)$ have sizes $a_{1},...,a_{s}$ with $e_{1},...,e_{s}$ 
 edges inside them, respectively, is at most
\begin{equation}\label{2conngraphs}
\binom{n^{2}}{r}\binom{n}{a_{1},...,a_{s}}\prod_{i=1}^{s}\binom{\binom{a_{i}}{2}}{e_{i}}.
\end{equation}
Here, $\binom{n^{2}}{r}$ is an upper bound on the number of ways to pick the edges of $R(G)$, $\binom{n}{a_{1},...,a_{s}}$ is the number of 
ways to partition $[n]$ into parts of size $a_{1},...,a_{s}$, and $\binom{\binom{a_{i}}{2}}{e_{i}}$ is the number of ways to choose the 
$e_{i}$ edges in a component of size $a_{i}$.
We shall prove that (\ref{2conngraphs}) is at most $\binom{\binom{n-s+1}{2}}{k} \textrm{exp}_2(3\epsilon rn/6)$. Let us bound the terms 
in (\ref{2conngraphs}).

First, $\binom{n^{2}}{r} \leq \textrm{exp}_2(2r\log n) < \textrm{exp}_2(\epsilon rn/6)$, if $n$ is sufficiently large given $\epsilon$.

Also, $\binom{n}{a_{1},...,a_{s}}\leq s^{n}=  \textrm{exp}_2(n\log s)$. Unfortunately, if $r$ is small, we cannot bound this term by $
\textrm{exp}_2(c\epsilon rn)$, where $c$ is some fixed constant. We shall overcome this obstacle later in the proof.

Finally,
$$\prod_{i=1}^{s}\binom{\binom{a_{i}}{2}}{e_{i}}\leq \binom{\sum_{i=1}^{s}\binom{a_{i}}{2}}{k-r}\leq \binom{\binom{n-s+1}{2}}{k-r},$$
where the last inequality holds by (\ref{square1}) in Lemma \ref{squares}.
Here,
$$\binom{\binom{n-s+1}{2}}{k-r}<\binom{r+\binom{n-s+1}{2}}{k},$$
see \ref{app4} in Lemma \ref{binom}.
Hence, we have
$$\prod_{i=1}^{s}\binom{\binom{a_{i}}{2}}{e_{i}}\leq \binom{\binom{n-s+1}{2}+\epsilon rn/6}{k},$$
provided $n>6/\epsilon$.

First, suppose that $r$ is such that $\log r<\epsilon r/6$. In this case, we have $\binom{n}{a_{1},...,a_{s}}\leq \textrm{exp}_2(\epsilon rn/
6).$
 Hence, (\ref{2conngraphs}) is at most $\binom{\binom{n-s+1}{2}+\epsilon rn/6}{k} \cdot \textrm{exp}_2(2\epsilon rn/6)$.

Now consider the case when $\log r>\epsilon r/6$. Then $r<R(\epsilon)$, where $R(\epsilon)$ is a constant only depending on $\epsilon$. In 
this case, we shall bound the product
\begin{equation}\label{eq2}
\binom{n}{a_{1},...,a_{s}}\prod_{i=1}^{s}\binom{\binom{a_{i}}{2}}{e_{i}}.
 \end{equation}

 Without loss of generality, suppose that $a_{1}\geq...\geq a_{s}$ and observe that $\binom{n}{a_{1},...,a_{s}}<n^{a_{2}+...+a_{s}}$. Thus, 
 if $a_{1}\geq n-4r$, then $\binom{n}{a_{1},...,a_{s}}<n^{4r}<\textrm{exp}_2(\epsilon rn/6)$, if $n$ is sufficiently large given $\epsilon$. 
 Now suppose that $a_{1}<n-4r$.
Applying (\ref{square3}) in Lemma \ref{squares}, we get
$$\sum_{i=1}^{s}\binom{a_{i}}{2}\leq \binom{4r-s-2}{2}+\binom{n-4r}{2}$$
$$\leq 8r^{2}+\binom{n-4r}{2}.$$
Suppose $n>20R(\epsilon)$, then the inequality
$$8r^{2}+\binom{n-4r}{2}\leq \binom{n-s+1}{2}-2rn$$
holds as well. Hence,
$$\prod_{i=1}^{s}\binom{\binom{a_{i}}{2}}{e_{i}}<\binom{\sum_{i=1}^{s}\binom{a_{i}}{2}}{k-r}\leq \binom{\binom{n-s+1}{2}-2rn}{k-r}<
\binom{\binom{n-s+1}{2}-2rn+r}{k},$$
where the last inequality holds by \ref{app4} in Lemma \ref{binom}. Also, using \ref{app2} in Lemma \ref{binom},

$$\binom{\binom{n-s+1}{2}-2rn+r}{k}\leq \binom{\binom{n-s+1}{2}}{k} \textrm{exp}_2(-rn).$$

Thus, we can bound (\ref{eq2}) from above by $\binom{\binom{n-s+1}{2}}{k}$, and so (\ref{2conngraphs}) is at most
$$\binom{\binom{n-s+1}{2}+\epsilon rn/6}{k} \textrm{exp}_2(2\epsilon rn/6)$$ in this case as well.

\bigskip

Now let us bound the number of all $2$-edge-connected graphs with $k$ edges, for which $|R(G)|=r$ and $G-R(G)$ has $s$ components. The number 
of such graphs is at most
\begin{equation}\label{eqsomething}
\sum_{a_{1}+...+a_{s}=n}\sum_{e_{1}+...+e_{s}=k-r}\binom{n^{2}}{r}\binom{n}{a_{1},...,a_{s}}\prod_{i=1}^{s}\binom{\binom{a_{i}}{2}}{e_{i}}.
\end{equation}
The first sum has exactly $\binom{n}{s-1}$ terms since $a_i \geq 1$ for every $i \in [s]$, while the second sum has $\binom{k-r+s}{s-1}$ 
terms. Therefore, (\ref{eqsomething}) is at most
$$\binom{n}{s-1}\binom{k-r+s}{s-1}\binom{\binom{n-s+1}{2}+\epsilon rn/6}{k} \textrm{exp}_2(2\epsilon rn/6).$$
Here, $\binom{n}{s-1}\leq \textrm{exp}_2(r\log n)$ and $\binom{k-r+s}{s-1}< \textrm{exp}_2(2r\log n)$. Thus, (\ref{eqsomething}) is at most
$$\binom{\binom{n-s+1}{2}+\epsilon rn/6}{k} \textrm{exp}_2(3\epsilon rn/6),$$
provided $n$ is sufficiently large given $\epsilon$.

Finally, the number of $2$-edge-connected graphs with $|R(G)|=r$ and $k$ edges is at most

$$\sum_{i=q}^{r}\binom{\binom{n-i+1}{2}+\epsilon rn/6}{k} \mathrm{exp}_2(3\epsilon rn/6)<\binom{\binom{n-q+1}{2}+\epsilon rn/6}{k}
\mathrm{exp}_2(4\epsilon rn/6).$$

Applying \ref{app2} in Lemma \ref{binom}, we get
$$|I_{r}^{(k)}|\leq\binom{\binom{n-q+1}{2}+\epsilon rn}{k}.$$

\end{proof}

In the proof of Theorem \ref{mainthm}, we shall also use the following technical lemma. Again, recall that $M = \lceil \binom{n}{2} / 2 \rceil$.

\begin{lemma}\label{tech}
Let $n>150$. Let $G$ be a connected graph on vertex set $[n]$ such that $e(G)\geq M$ and $Bt(G)=(B,\{A_{1},...,A_{t}\})$. Suppose that $|A_i| 
\leq n-2$ for $i \in [t]$. Then
\begin{equation}\label{equlemma}
\sum_{1\leq i<j\leq t}|A_{i}||A_{j}|-2(t-1)-\sum_{i=1}^{t}|R(G[A_{i}])|\geq n.
\end{equation}
\end{lemma}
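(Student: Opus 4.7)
The plan is to apply Lemma \ref{2connstruct} to each induced graph $G[A_i]$, introduce the number $q_i$ of components of $G[A_i]-R(G[A_i])$, and set $Q=\sum_{i=1}^t q_i$. One then rewrites (\ref{equlemma}) in terms of $Q$, uses the hypothesis $e(G)\geq M$ together with Lemma \ref{squares} to push an upper bound $Q\leq(1-1/\sqrt 2)n+O(1)$, and finishes with a short case analysis on whether $t=2$ or $t\geq 3$.

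By Lemma \ref{2connstruct}, $|R(G[A_i])|\leq 2q_i-2$, so $\sum_i |R(G[A_i])|\leq 2Q-2t$, giving
$$\sum_{1\leq i<j\leq t}|A_i||A_j|-2(t-1)-\sum_{i=1}^t |R(G[A_i])|\geq \sum_{1\leq i<j\leq t}|A_i||A_j|+2-2Q.$$
Hence it suffices to prove $\sum_{i<j}|A_i||A_j|\geq n+2Q-2$.

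To bound $Q$ above, let $c_{i,1},\ldots,c_{i,q_i}$ be the sizes of the components of $G[A_i]-R(G[A_i])$. Since a component on $c_{i,j}$ vertices has at most $\binom{c_{i,j}}{2}$ edges, Lemma \ref{squares} yields $e(G[A_i])-|R(G[A_i])|\leq \sum_j \binom{c_{i,j}}{2}\leq \binom{|A_i|-q_i+1}{2}$. Combining with $|R(G[A_i])|\leq 2q_i-2$ and summing,
$$e(G)-(t-1)\leq \sum_{i=1}^t\binom{|A_i|-q_i+1}{2}+2Q-2t\leq \binom{n-Q+1}{2}+2Q-2t,$$
where the final inequality is Lemma \ref{squares} applied to the $t$ positive integers $|A_i|-q_i+1$, which sum to $n-Q+t$. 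Substituting $e(G)\geq M\geq \binom{n}{2}/2$ and rearranging produces a quadratic inequality in $n-Q$ whose solution gives $n-Q\geq n/\sqrt 2-O(1)$, i.e.\ $Q\leq(1-1/\sqrt 2)n+O(1)$ once $n$ is large enough.

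Finally, lower bound $\sum_{i<j}|A_i||A_j|$. Lemma \ref{squares} gives $\sum_{i<j}|A_i||A_j|\geq (n-t+1)(t-1)+\binom{t-1}{2}=(t-1)(n-t/2)$, which for $t\geq 3$ is at least $2n-3$. For $t=2$, the hypothesis $\min\{|A_1|,|A_2|\}>1$ combined with the observation that a $2$-edge-connected simple graph has either one vertex or at least three forces $|A_1|,|A_2|\geq 3$, whence $|A_1||A_2|\geq 3(n-3)$. Either way $\sum_{i<j}|A_i||A_j|\geq 2n-O(1)$, and combined with the bound on $Q$ we obtain $\sum_{i<j}|A_i||A_j|+2-2Q\geq \sqrt 2\cdot n-O(1)\geq n$ for $n>150$. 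The main technical step is the quadratic estimate upper-bounding $Q$; tracking the additive constants carefully is what makes the threshold $n>150$ sufficient.
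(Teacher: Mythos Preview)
Your argument is correct and takes a genuinely different route from the paper's.

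The paper proceeds by a case split on $\max_i |A_i|$. If the largest block has at most $n-6$ vertices, the crude bounds $\sum_{i<j}|A_i||A_j|\geq 6(n-6)$ and $\sum_i|R(G[A_i])|<2n$ already suffice. If instead some block, say $A_1$, has at least $n-5$ vertices, the paper uses $e(G)\geq M$ to bound the number of components of $G[A_1]-R(G[A_1])$ and hence $|R(G[A_1])|<2n/3$; it then checks that (\ref{equlemma}) holds unless $|A_1|=n-1$ and $t=2$.

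Your approach is more unified: rather than singling out the largest block, you introduce the global quantity $Q=\sum_i q_i$ and reduce the target inequality to $\sum_{i<j}|A_i||A_j|\geq n+2Q-2$. The edge hypothesis $e(G)\geq M$ is then used once, via the chain $e(G)-(t-1)\leq\binom{n-Q+1}{2}+2Q-2t$, to force $Q\leq(1-1/\sqrt2)\,n+O(1)$. This avoids the case analysis on block sizes; the only remaining split is the mild one on whether $t=2$ or $t\geq 3$, where your observation that a simple $2$-edge-connected block has either $1$ or $\geq 3$ vertices yields $|A_1||A_2|\geq 3(n-3)$ in the former case.

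Both proofs hinge on the same mechanism---the edge count forces the ``bad'' quantity $\sum_i|R(G[A_i])|$ (or its proxy $Q$) to be well below $n$---but your packaging is cleaner. One minor point: you assert that the constants work out for $n>150$ without exhibiting them. A quick check shows, for instance, that in the $t\geq 3$ case you need $Q\leq(n-1)/2$, and plugging $Q=(n-1)/2$ into your edge inequality gives a contradiction already for $n>11$; the $t=2$ case is even looser. So the threshold is comfortably met, but it would be worth recording this explicitly.
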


\begin{proof}
By Lemma \ref{2connstruct}, we have $|R(G[A_{i}])|< 2|A_{i}|$. Hence, $\sum_{i=1}^{t}|R(G[A_{i}])|< 2n$.

First, suppose that $\max\{|A_{1}|,..,|A_{t}|\}\leq n-6$. By (\ref{square4}) in Lemma \ref{squares}, we have
$$\sum_{1\leq i<j\leq t}|A_{i}||A_{j}|\geq 6(n-6)\geq 5n.$$
Hence, using the trivial bound $t-1<n$, we have that (\ref{equlemma}) holds.

Now suppose that $|A_{1}|\geq n-5$. In this case, we have $t\leq 6$. Let $H=G[A_{1}]$. Every edge of $G$ not contained in $H$ is either in $B
$ or it is an edge of $G[[n]\setminus A_{1}]$. Hence, the number of edges not contained in $H$ is at most $20$, so $e(H)\geq M-20$.

Let $H_{1},...,H_{q}$ be the vertex sets of the components of $H-R(H)$. Then, by Lemma \ref{2connstruct}, the number of edges of $H$ is at 
most
$$2q-2+\sum _{i=1}^{q}\binom{|V(H_{i})|}{2}<2n+\binom{n-q+1}{2},$$
where the inequality holds by (\ref{square1}) in Lemma \ref{squares}.
Comparing the lower and upper bounds on $e(H)$ we get the inequality
 $$M-20<2n+\binom{n-q+1}{2}.$$
If $q>n/3$, the right hand side of the inequality is at most $2n^{2}/9+3n$, while the left hand side is larger than $n^{2}/4-n$. This is a
contradiction, noting that $2n^{2}/9+3n<n^{2}/4-n$ for $n>150$. Hence, we have $q<n/3$, implying $|R(H)|<2n/3$. This gives
$$\sum_{i=1}^{t}|R(G[A_{i}])|\leq |R(H)|+2(|A_{2}|+...+|A_{t}|)<2n/3+10.$$
Since $|A_{1}|\leq n-2$, we have $\sum_{1\leq i<j\leq t}|A_{i}||A_{j}|\geq 2(n-2)$ by (\ref{square4}) in Lemma \ref{squares}, so (\ref{equlemma}) holds.

\end{proof}

\section{Matchings between levels}\label{sect:matching}

In this section, we prove Theorem \ref{mainthm}.

Let $n-1\leq k,l\leq m$. We say that there is a complete matching from $\C^{(k)}$ to $\C^{(l)}$, if there is an injection $f: \C^{(k)}
\rightarrow \C^{(l)}$ such that $G$ and $f(G)$ are comparable for all $G\in \C^{(k)}$. The next lemma states that to prove Theorem 
\ref{mainthm}, it is enough to find a complete matching from the smaller sized level to the larger sized level for any two consecutive 
levels. Due to its simplicity, we shall only sketch the proof of this lemma.

\begin{lemma}\label{trivial}
Suppose that there is a complete matching from $\C^{(k)}$ to $\C^{(k+1)}$ for $k=n-1,...,M-1$, and there is a complete matching from $\C^{(l+1)}$ to $\C^{(l)}$ for $l=M,...,m-1$. Then the largest antichain in $\C$ is $\C^{(M)}$.
\end{lemma}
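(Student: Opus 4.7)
The plan is a textbook Dilworth-style argument: use the given matchings to decompose $\G$ into exactly $|\G^{(M)}|$ chains, each containing a single element of $\G^{(M)}$, and then invoke the elementary fact that any antichain meets every chain in at most one element, so has size at most $|\G^{(M)}|$.

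Concretely, write $f_k : \G^{(k)} \to \G^{(k+1)}$ for the given injections at levels $k = n-1,\ldots,M-1$ and $g_l : \G^{(l+1)} \to \G^{(l)}$ for those at levels $l = M,\ldots,m-1$. For each $G \in \G^{(M)}$, I would build a chain $C_G$ by following these matchings in both directions: extend $G$ downward by repeatedly taking $f_{j-1}^{-1}$ as long as the current element lies in the image of $f_{j-1}$, and extend $G$ upward by repeatedly taking $g_j^{-1}$ as long as possible. Because $G' < f_k(G')$ and $g_l(H) < H$ by hypothesis, consecutive elements of $C_G$ are comparable in $\G$, so $C_G$ is genuinely a chain.

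Finally, I would verify that $\{C_G : G \in \G^{(M)}\}$ partitions $\G$. Any $H \in \G^{(k)}$ with $k < M$ is sent by the composition $f_{M-1} \circ \cdots \circ f_k$ to a uniquely determined element of $\G^{(M)}$, and by injectivity of each $f_j$ it sits in a unique chain; the case $k > M$ is symmetric via the $g_l$. This exhibits $|\G^{(M)}|$ pairwise disjoint chains covering $\G$, so every antichain has size at most $|\G^{(M)}|$, and since $\G^{(M)}$ is itself an antichain this bound is attained. No real obstacle arises, which is presumably why the authors only sketch the argument; the only mild bookkeeping is checking that the backward and forward extensions of $C_G$ do not collide (they cannot, since they lie in disjoint level ranges and agree only at $G$).
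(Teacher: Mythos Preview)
Your proposal is correct and is precisely the argument the paper has in mind: build a chain partition of $\G$ into $|\G^{(M)}|$ chains by gluing the matchings together, then use the trivial direction of Dilworth's theorem. You supply more detail than the authors do (they only sketch it), but the approach is identical.
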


\begin{proof}
Using the complete matchings, one can build a chain partition of $\C$ into $|\C^{(M)}|$ chains. But the size of the maximal 
antichain in $\C$ is at most the number of chains in any chain partition of $\C$.

\end{proof}

\bigskip

First, we show that if we are below the middle level $\C^{(M)}$, or at least $n$ above the middle level, then it is easy to prove the 
existence of a complete matching between consecutive levels.

Let $X\subset \C^{(k)}$ for some $n-1\leq k\leq m$. The \emph{lower shadow} of $X$ is
$$\Delta(X)=\{G\in \mathcal{C}^{(k-1)}: \exists H\in X, G<H\},$$
and the \emph{upper shadow} of $X$ is
$$\nabla(X)=\{G\in \mathcal{C}^{(k+1)}: \exists H\in X, H<G\}.$$

In our proofs, we shall apply the well known theorem of Hall \cite{hall}.

\begin{theorem}(Hall's theorem)
	Let $G=(A,B;E)$ be a bipartite graph. There is a complete 
	matching in $G$ from $A$ to $B$ if and only if $|X|\leq|\Gamma(X)|$ for all $X\subset A$, where $\Gamma(X)$ denotes the set of vertices adjacent to some element of $X$.
\end{theorem}  

First, let us deal with the levels below $\C^{(M)}$.

\begin{lemma}\label{smalllevels}
There is a complete matching from $\C^{(k)}$ to $\C^{(k+1)}$ for ${k=n-1,...,M-1}$.
\end{lemma}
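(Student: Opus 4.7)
The plan is to apply Hall's theorem to the bipartite graph $B$ whose parts are $\G^{(k)}$ and $\G^{(k+1)}$, with $G \sim H$ iff $G < H$ (equivalently, $H = G + e$ for some $e \notin E(G)$). A complete matching from $\G^{(k)}$ to $\G^{(k+1)}$ exists as soon as $|\nabla(X)| \geq |X|$ for every $X \subseteq \G^{(k)}$, and I will obtain this by a straightforward double-counting of the edges of $B$ between $X$ and $\nabla(X)$.

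The key observation, which makes this direction essentially free, is that connectivity is preserved under adding edges. Therefore every $G \in \G^{(k)}$ has degree \emph{exactly} $m - k$ in $B$: any of the $m - k$ possible edges not in $E(G)$ can be added, and the resulting graph is automatically in $\G^{(k+1)}$. On the other side, every $H \in \G^{(k+1)}$ has degree at most $k+1$ in $B$, since removing a bridge of $H$ leaves a disconnected graph. Double counting the edges of $B$ between $X$ and $\nabla(X)$ gives
\[
|X|(m-k) \;=\; \sum_{G \in X} \deg_B(G) \;\leq\; \sum_{H \in \nabla(X)} \deg_B(H) \;\leq\; |\nabla(X)|(k+1),
\]
so $|\nabla(X)| \geq |X|(m-k)/(k+1)$.

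The hypothesis $k \leq M - 1 = \lceil m/2 \rceil - 1$ implies $k + 1 \leq \lceil m/2 \rceil \leq m - k$ (checking the even and odd parities of $m$ separately, one gets $m - k \geq k+1$ in each case), so the ratio $(m-k)/(k+1) \geq 1$ and Hall's condition $|\nabla(X)| \geq |X|$ holds. There is no real obstacle here; the symmetric statement for $l \geq M$ is much harder because removing an edge from a connected graph can destroy connectivity, which is precisely why the skeleton analysis and the bound on $|R(G)|$ from Section 2 are needed in that case.
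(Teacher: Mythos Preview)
Your proof is correct and follows essentially the same approach as the paper's own proof: apply Hall's theorem, double-count edges between $X$ and $\nabla(X)$ using that each $G\in X$ has degree exactly $m-k$ (since adding an edge preserves connectivity) while each $H\in\nabla(X)$ has degree at most $k+1$, and conclude from $k+1\le m-k$. Your added justification for the inequality $k+1\le m-k$ via the parity of $m$ and the remark contrasting this with the harder downward direction are both accurate.
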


\begin{proof} Let $X\subset \C^{(k)}$. By Hall's theorem, it is enough to show that $|X|\leq |\nabla(X)|$. Let $B$ be the bipartite graph 
with vertex partition ($X,\nabla(X)$), and the edges of $B$ being the comparable pairs.  If $G\in X$, the degree of $G$ is $m-k$. Also, if $H
\in \nabla(X)$, the degree of $H$ is at most $k+1$.

Let $e$ be the number of edges of $B$. Then, counting $e$ from $X$, and then from $\nabla(X)$, we have

$$|X|(m-k)=e,$$
and
$$e\leq|\nabla(X)|(k+1).$$
Hence,

$$|X|\leq|\nabla(X)|(k+1)/(m-k)\leq |\nabla(X)|.$$

\end{proof}

\bigskip

Using similar ideas, we now show that if we are above the middle level by at least $n$, then there is a matching from $\C^{(k+1)}$ to $
\C^{(k)}$.

\begin{lemma}\label{largelevels}
There is a complete matching from $\C^{(k+1)}$ to $\C^{(k)}$ for ${k=M+n,...,m}$.
\end{lemma}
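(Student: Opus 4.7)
The plan is to parallel the proof of Lemma \ref{smalllevels}: given $Y\subset \G^{(k+1)}$, I will verify Hall's condition $|Y|\leq |\triangle(Y)|$ by double counting the edges of the bipartite graph $B$ whose color classes are $Y$ and $\triangle(Y)$, with edges given by comparable pairs. Hall's theorem then supplies the desired matching.

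The one new wrinkle compared to Lemma \ref{smalllevels} is that an arbitrary edge of $H\in \G^{(k+1)}$ cannot in general be removed while keeping the graph connected. Precisely, the degree of $H$ in $B$ equals $(k+1)-b(H)$, where $b(H)$ is the number of bridges of $H$. My first step would be to note that the bridges of any connected graph on $[n]$ form a forest: an edge lying on a cycle is never a bridge, so a cycle in the bridge-subgraph would give a cycle of bridges in $H$, a contradiction. Consequently $b(H)\leq n-1$, giving $\deg_B(H)\geq k-n+2$. For $G\in \triangle(Y)$, the degree in $B$ is at most $m-k$, the total number of non-edges of $G$, exactly as in the proof of Lemma \ref{smalllevels}.

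Double counting then yields
$$|Y|(k-n+2) \;\leq\; e(B) \;\leq\; |\triangle(Y)|(m-k).$$
When $k\geq M+n$ one has $m-k\leq m-M-n\leq M-n$ while $k-n+2\geq M+2$, so $m-k<k-n+2$, which forces $|Y|\leq |\triangle(Y)|$, as required by Hall's theorem.

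There is no real obstacle in this range; indeed the hypothesis $k\geq M+n$ is chosen precisely to absorb the up to $n-1$ bridges each graph may have. The genuinely delicate regime is the intermediate window $M\leq k<M+n$, where the bridge count becomes comparable to the slack $2k-m$ above the middle level, and this naive degree-counting fails. That case is presumably where the skeleton decomposition and the estimates of Section 2 enter the picture.
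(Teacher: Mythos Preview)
Your proof is correct and follows essentially the same approach as the paper: both verify Hall's condition by double counting edges in the bipartite comparability graph between $Y\subset\G^{(k+1)}$ and $\triangle(Y)$, using the bounds $\deg_B(H)\geq k+2-n$ (from the bridge count) and $\deg_B(G)\leq m-k$. The only cosmetic difference is that the paper invokes Lemma~\ref{1conngraphs} (which gives $|B|=t-1\leq n-1$) for the bridge bound, whereas you argue directly that the bridges form a forest; the resulting inequalities and the conclusion are identical.
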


\begin{proof}
Let $X\subset \C^{(k+1)}$. By Hall's theorem, it is enough to show that $|X|\leq |\Delta(X)|$. Let $B$ be the bipartite graph with vertex 
partition ($X,\Delta(X)$), and the edges of $B$ being the comparable pairs. If $G\in \Delta(X)$, then the degree of $G$ in $B$ is at most $m-
k$.

Now let $G\in X$. If $e\in E(G)$ such that $G-e$ is not an element of $\C$, then $e$ is a bridge of $G$. However, by Lemma \ref{1conngraphs},
the number of bridges of $G$ is at most $n-1$. Hence, the degree of $G$ is at least $k+2-n$. Counting the number of edges of $B$ two ways, we 
get
$$|X|(k+2-n)\leq |E(B)|,$$
and
$$|\Delta(X)|(m-k)\geq |E(B)|.$$
Hence,

$$\frac{|\Delta(X)|}{|X|}\geq \frac{k+2-n}{m-k}\geq 1.$$

\end{proof}

Proving that there is a matching from $\C^{(k)}$ to $\C^{(k-1)}$ for the values of $k$ that are slightly larger than $M$ is more difficult. 
The remainder of this section is devoted to this problem. Before showing the details, we briefly outline the strategy for showing that there 
exists a complete matching from $\C^{(k)}$ to $\C^{(k-1)}$, where $M+1 \leq k<M+n$.

Our goal is to show that for every $X\subset \C^{(k)}$, we have $|\Delta(X)|\geq |X|$. To accomplish this, we write $X$ as $Y\cup Z$, where $Y$ is the set of $2$-edge-connected graphs in $X$ and $Z$ is the set of the non-2-edge-connected graphs in $X$. We first 
show that if the two sets, $Y$ and $Z$, do \emph{not} have roughly the same size, then the larger of the two has a lower shadow that is already larger than $|X|$.

Now suppose that $|Y|\approx |Z|$. We show the existence of three functions $c_{1},c_{2},c_{3}:\mathbb{N}\rightarrow \mathbb{R}^{+}$ satisfying the following properties:
\begin{enumerate}
	\item  $|\Delta(Y)| \geq |Y|(1+c_{1}(|Y|))$,
	\item  $|\Delta(Z)| \geq |Z|(1+c_{2}(|Z|))$,
	\item if $U$ is the set of $2$-edge-connected graphs in $\Delta(Y)$, then $|U|\geq |Y|(1-c_{3}(|Y|))$,
	\item $c_{1}(|Y|)c_{2}(|Z|)\geq c_{3}(|Y|)$, if $|Y|\approx |Z|$.
\end{enumerate}
Roughly, 1. and 2. state that the lower shadow of $Y$ and $Z$ is slightly larger than $Y$ and $Z$, respectively. Now, we would like to guarantee that $\Delta(X)=\Delta(Y)\cup \Delta(Z)$ is also larger than $Y\cup Z$. If this is not the case, then we must have that $\Delta(Y)\setminus \Delta(Z)$ is too small. But note that as $\Delta(Z)$ contains only non-2-edge-connected graphs, $U$ is contained in $\Delta(Y)\setminus \Delta(Z)$. Hence, $|U|$ is a lower bound on the size of the set $\Delta(Y)\setminus \Delta(Z)$. Thus, 3. tells us that $\Delta(Y)\setminus \Delta(Z)$ cannot be much smaller than $Y$, and property 4. guarantees (as we shall see later) that we truly have $|Y\cup Z|\leq |\Delta(Y)\cup\Delta(Z)|$.

\bigskip

We remind the reader that $\G$ is the family of all graphs on vertex set $[n]$. For $X\subset \C^{(k)}$, let $\partial(X)=\{H\in \G^{(k-1)}:
\exists \, G\in X, H<G\}$. As $(\G,<)$ is isomorphic to $(2^{[m]},\subset)$, the Kruskal-Katona theorem \cite{katona1, kruskal} tells us 
which subfamily of $\G^{(k)}$ of given size minimizes the lower shadow. Instead of using this, however, we use a weaker form of the Kruskal-
Katona theorem, proved by Lov\'{a}sz \cite{lovaszlemma}. This affords us a computationally more convenient way to obtain a lower bound on the size of $\partial(X)$.

\begin{lemma}\label{lovasz} \textnormal{(Lov\'{a}sz \cite{lovaszlemma})}
Let $X\subset \C^{(k)}$ be nonempty and let $x$ be a real number such that $|X|=\binom{x}{k}$. Then
$$|\partial(X)|\geq \binom{x}{k-1}.$$
In particular,

  $$\frac{|\partial(X)|}{|X|}\geq \frac{k}{x-k+1}.$$
\end{lemma}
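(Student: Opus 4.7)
The plan is to derive the bound from the Kruskal--Katona theorem. Since $(\D,<)$ is isomorphic to the Boolean lattice $2^{[m]}$, we may regard $X\subset \D^{(k)}$ and bound its shadow inside $\D$. The Kruskal--Katona theorem in its sharpest (cascade) form states that if we write $|X|$ uniquely as
$$|X|=\binom{a_k}{k}+\binom{a_{k-1}}{k-1}+\cdots+\binom{a_s}{s},\qquad a_k>a_{k-1}>\cdots>a_s\geq s\geq 1,$$
then
$$|\partial X|\geq\binom{a_k}{k-1}+\binom{a_{k-1}}{k-2}+\cdots+\binom{a_s}{s-1}.$$
Granting this, the task reduces to a purely algebraic comparison with $\binom{x}{k-1}$, where $x\geq k$ is the unique real number with $\binom{x}{k}=|X|$ (existence and uniqueness of such an $x$ being guaranteed by the monotonicity of $\binom{\cdot}{k}$ on $[k-1,\infty)$, as recorded in the Appendix).

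More precisely, I would prove by induction on $k$ the following algebraic statement: if $\binom{x}{k}=\binom{a_k}{k}+\binom{a_{k-1}}{k-1}+\cdots+\binom{a_s}{s}$ is the cascade representation of the left-hand side, then
$$\binom{x}{k-1}\leq\binom{a_k}{k-1}+\binom{a_{k-1}}{k-2}+\cdots+\binom{a_s}{s-1}.$$
The base case $k=1$ is immediate. For the inductive step, let $y\geq k-1$ be the real number with $\binom{y}{k-1}=\binom{x}{k}-\binom{a_k}{k}=\binom{a_{k-1}}{k-1}+\cdots+\binom{a_s}{s}$. The induction hypothesis, applied to the truncated cascade in dimension $k-1$, yields $\binom{y}{k-2}\leq\binom{a_{k-1}}{k-2}+\cdots+\binom{a_s}{s-1}$, so it suffices to verify the single-variable inequality $\binom{x}{k-1}-\binom{y}{k-2}\leq\binom{a_k}{k-1}$. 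This is a calculus fact about the real binomial function, exploiting its monotonicity and convexity together with the constraint $x<a_k+1$ forced by the cascade representation.

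The ``in particular'' clause is then immediate from the identity $\binom{x}{k-1}/\binom{x}{k}=k/(x-k+1)$ satisfied by the extended binomial coefficient. The main technical obstacle is the inductive algebraic step: although elementary in principle, it demands careful bookkeeping of the cascade inequalities and a clean use of continuity and convexity of $u\mapsto\binom{u}{k-1}$ in its real argument. The combinatorial heavy lifting is entirely absorbed into the Kruskal--Katona input, so no further graph-theoretic argument is needed here.
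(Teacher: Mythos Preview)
The paper does not actually prove this lemma: it is quoted verbatim from Lov\'asz's book, preceded only by the remark that it is ``a weaker, but computationally more convenient form of the Kruskal--Katona theorem.'' There is therefore no proof in the paper to compare your proposal against.

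That said, your plan is a legitimate way to obtain the result. The reduction is set up correctly: inducting on $k$ and peeling off the leading cascade term leaves exactly the inequality
\[
\binom{x}{k-1}\le\binom{a_k}{k-1}+\binom{y}{k-2}
\quad\text{subject to}\quad
\binom{x}{k}=\binom{a_k}{k}+\binom{y}{k-1},\ a_k\le x<a_k+1,
\]
and this inequality is true (both sides coincide at $x=a_k+1$, where $y=a_k$, by Pascal's rule, and one checks that the difference has the right sign on $[a_k,a_k+1)$). You are honest that this step is where the actual work lies; calling it ``a calculus fact'' is fair but slightly undersells it, since the monotonicity argument does require some care with the derivative of the real binomial. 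The ``in particular'' clause then follows from property~(\ref{app1}) in the Appendix, exactly as you say.

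It is worth noting that the standard textbook proof of Lov\'asz's form does \emph{not} go through the full cascade Kruskal--Katona theorem: one usually argues directly by compression/shifting, inducting on the size $m$ of the ground set rather than on $k$. That route is self-contained and avoids the cascade bookkeeping altogether, which is why it is the one most references (including Lov\'asz's own) present. Your approach trades that for an appeal to the stronger theorem plus an algebraic comparison; both are valid, and yours makes the ``weaker form of Kruskal--Katona'' description literal.
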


 We remind the reader that we use the extended definition of binomial coefficients introduced in Section \ref{sect:prelim}, so both in the previous lemma and in what comes, $x$ need not to be an integer in $\binom{x}{k}$.

Let $\B$ be the set of $2$-edge-connected graphs in $\C$ and let $\B^{(k)}=\C^{(k)}\cap \B$.  If $X\subset \B^{(k)}$, then $\Delta(X)=
\partial(X)$. Hence, we can use Lemma \ref{lovasz} to get a lower bound for the size of $\Delta(X)$.

\bigskip

In the next lemma we show that if the size of $X\in \C^{(k)}$ is sufficiently large, then we have $|\Delta(X)|\geq|X|$.

\begin{lemma}\label{largeX}
Let $\epsilon>0$. There exists $n_{2}(\epsilon)$ such that if $n>n_{2}(\epsilon)$ the following holds. Let $M+1\leq k<M+n$ and let $|X|=
\binom{x}{k}$, where $x>\binom{n-1}{2}+\epsilon n$. We have $|\Delta(X)|>|X|$.
\end{lemma}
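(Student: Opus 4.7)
The plan is to reduce to a standard shadow estimate in the Boolean lattice $\D$ and then control the error coming from disconnected graphs. Since every element of $\triangle(X)$ lies in $\partial(X)$ and $\partial(X)\setminus\triangle(X)$ consists entirely of disconnected graphs on $[n]$, Lemma \ref{disc} yields $|\partial(X)\setminus\triangle(X)|\leq 2^{\binom{n-1}{2}+o(n)}$. Combining this with Lov\'{a}sz's lemma applied to $X\subset\D^{(k)}$, we obtain
$$|\triangle(X)|\geq \binom{x}{k-1}-2^{\binom{n-1}{2}+o(n)},$$
so it suffices to prove $\binom{x}{k-1}-\binom{x}{k}>2^{\binom{n-1}{2}+o(n)}$.

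Next I would use that $X$ sits above the middle level. Since $k\geq M+1\geq \lceil m/2\rceil+1$ and $|X|=\binom{x}{k}\leq\binom{m}{k}$ forces $x\leq m$, we have $x\leq 2k-2$, so
$$\binom{x}{k-1}-\binom{x}{k}=\binom{x}{k-1}\cdot\frac{2k-1-x}{k}\geq \frac{\binom{x}{k-1}}{k}.$$
It therefore remains to establish $\binom{x}{k-1}\geq k\cdot 2^{\binom{n-1}{2}+o(n)}$.

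This is where the hypothesis $x>\binom{n-1}{2}+\epsilon n$ becomes essential. A short calculation using $M\leq k-1\leq M+n-2$ together with $\binom{n-1}{2}+\epsilon n<x\leq m$ gives $0\leq k-1-\lfloor x/2\rfloor\leq 2n$, so $k-1$ is within $O(n)=O(\sqrt{x})$ of the mode of $j\mapsto\binom{x}{j}$. Iterating the ratio $\binom{x}{j+1}/\binom{x}{j}=(x-j)/(j+1)$ over these $O(n)$ steps and using the inequality $1-y\geq e^{-2y}$ for $y\leq 1/2$, one obtains $\binom{x}{k-1}\geq c_0\binom{x}{\lfloor x/2\rfloor}$ for an absolute constant $c_0>0$. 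Together with the standard bound $\binom{x}{\lfloor x/2\rfloor}\geq 2^x/(x+1)$ and $x>\binom{n-1}{2}+\epsilon n$, this yields
$$\frac{\binom{x}{k-1}}{k}\geq \frac{c_0\cdot 2^x}{k(x+1)}\geq 2^{\binom{n-1}{2}+\epsilon n-O(\log n)},$$
which comfortably exceeds $2^{\binom{n-1}{2}+o(n)}$ once $n>n_2(\epsilon)$, completing the argument.

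The main technical point is the lower bound on $\binom{x}{k-1}$ near the mode; this is routine, but the key observation making the bookkeeping painless is that $k-1-\lfloor x/2\rfloor=O(n)$ while $x=\Theta(n^2)$, so the displacement from the mode is only of order $\sqrt{x}$ and costs at most a constant factor depending on $\epsilon$.
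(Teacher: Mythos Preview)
Your argument is correct and follows essentially the same route as the paper: Lov\'asz's lemma on $\partial(X)$, subtracting the disconnected graphs via Lemma~\ref{disc}, and then showing $\binom{x}{k-1}-\binom{x}{k}$ dominates $2^{\binom{n-1}{2}+o(n)}$. The only difference is in the final estimate of $\binom{x}{k-1}$: the paper uses monotonicity in $x$ together with Appendix property~(\ref{app2}) to write $\binom{x}{k-1}\geq\binom{\binom{n-1}{2}+\epsilon n}{k-1}\geq 2^{\epsilon n}\binom{\binom{n-1}{2}}{k-1}=2^{\binom{n-1}{2}+\epsilon n+o(n)}$ directly, whereas you detour through the central binomial coefficient $\binom{x}{\lfloor x/2\rfloor}$ and a displacement bound---both reach the same conclusion, though the paper's route is slightly cleaner and avoids the need to track the constant $c_0$.
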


\begin{proof} By Lemma \ref{lovasz},
$$|\partial(X)|\geq \binom{x}{k-1}.$$
Let $D$ be the set of disconnected graphs with $k-1$ edges. By Lemma \ref{disc}, $$|D|\leq \mathrm{exp}_2\left(\binom{n-1}{2}+o(n)\right).$$ Also,
$$|\Delta(X)|=|\partial(X)\setminus D|\geq|\partial(X)|-|D|\geq \binom{x}{k-1}- \mathrm{exp}_2\left(\binom{n-1}{2}+o(n)\right).$$
Thus, we get

$$|\Delta(X)|-|X|\geq \binom{x}{k-1}-\binom{x}{k}- \mathrm{exp}_2\left(\binom{n-1}{2}+o(n)\right)=$$
$$=\binom{x}{k-1}\frac{2k-x-1}{k}- \mathrm{exp}_2\left(\binom{n-1}{2}+o(n)\right)>\binom{\binom{n-1}{2}+\epsilon n}{k-1}\frac{1}{n^{2}}- \mathrm{exp}
_2\left(\binom{n-1}{2}+o(n)\right).$$

By \ref{app2} in Lemma \ref{binom}, we have $\binom{\binom{n-1}{2}+\epsilon n}{k-1}\geq \binom{\binom{n-1}{2}}{k-1} \cdot \mathrm{exp}
_2(\epsilon n)$. Also, $\binom{\binom{n-1}{2}}{k-1}= \mathrm{exp}_2(\binom{n-1}{2}+o(n))$ holds by Stirling's formula. Hence, we have
$$|\Delta(X)|-|X|\geq \mathrm{exp}_2\left(\binom{n-1}{2}+\epsilon n+o(n)\right)- \textrm{exp}_2(\binom{n-1}{2}+o(n)).$$
Thus, if $n$ is sufficiently large given $\epsilon$, $|\Delta(X)|>|X|$. \end{proof}

\bigskip

Now we show that if $X$ is a set of $2$-edge-connected graphs in $\C^{(k)}$, then the number of $2$-edge-connected graphs in the shadow of $X
$ cannot be much less than $|X|$.

\begin{lemma}\label{exp1}
Let $0<\epsilon<1/4$. There exists $n_{3}(\epsilon)$ such that if $n>n_{3}(\epsilon)$, the following holds. Let $M<k<M+n$ and let $X\subset 
\B^{(k)}$. Let $|X|=\binom{x}{k}$ and let $r$ be a positive integer satisfying $r<n$. If $x>\binom{n-(r+1)/2}{2}+\epsilon rn$, then
$$\frac{|\Delta(X)\cap \B^{(k-1)}|}{|X|}>1-\frac{4r}{n^{2}}.$$
\end{lemma}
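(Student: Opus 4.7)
The plan is to lower-bound $|\triangle(X)\cap\mathfrak{B}^{(k-1)}|$ via a direct double-counting argument and then control the resulting deficit using Lemma~\ref{numof2conn}.

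Since $X\subseteq\mathfrak{B}^{(k)}$, every $G\in X$ is $2$-edge-connected, so $G-e$ is connected for every $e\in E(G)$ and $\triangle(X)=\partial(X)$. Set $Y^{c}:=\triangle(X)\cap\mathfrak{B}^{(k-1)}$ and $\deg_{H}:=|\{G\in X:G\supset H\}|$ for $H\in Y^{c}$. For $G\in X$ and $e\in E(G)$, the shadow $G-e$ lies in $\mathfrak{B}^{(k-1)}$ precisely when $e\notin R(G)$. Double-counting the pairs $(G,e)$ with $e\in E(G)\setminus R(G)$ yields $\sum_{H\in Y^{c}}\deg_{H}=k|X|-\sum_{G\in X}|R(G)|$, and since $\deg_{H}\leq m-k+1$ (only $m-k+1$ non-edges can be added to $H$), one obtains
\[
|Y^{c}|\;\geq\;\frac{k-\bar{r}}{m-k+1}\,|X|,\qquad \bar{r}:=\frac{1}{|X|}\sum_{G\in X}|R(G)|.
\]
A rearrangement reduces the desired $|Y^{c}|/|X|>1-\frac{4r}{n^{2}}$ to $\bar{r}<2k-m-1+\frac{4r}{n^{2}}(m-k+1)$. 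A short calculation using $k\geq M+1$ and $m=\binom{n}{2}$ shows the right-hand side is at least $r+1-r/n$ (the minimum over $k$ is attained at $k=M+1$), so it suffices to prove $\bar{r}<r+1-r/n$.

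To bound $\bar{r}$, I decompose $X=\bigsqcup_{s\geq0}X_{s}$ with $X_{s}:=X\cap I_{s}^{(k)}$ (using $X_{1}=\emptyset$, by the remark following Lemma~\ref{2connstruct}), and split $\bar{r}=|X|^{-1}\sum_{s\geq2}s|X_{s}|$ at $s=r$. The head is controlled trivially: $\sum_{s=2}^{r}s|X_{s}|\leq r\sum_{s=2}^{r}|X_{s}|\leq r|X|$, contributing at most $r$ to $\bar{r}$. For the tail $s\geq r+1$, I apply Lemma~\ref{numof2conn} with parameter $\epsilon'=\epsilon/5$ to obtain $|X_{s}|\leq\binom{y_{s}}{k}$ with $y_{s}:=\binom{n-s/2}{2}+\epsilon'sn$. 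Using the identity $\binom{A}{2}-\binom{B}{2}=(A-B)(A+B-1)/2$ together with the hypothesis $x>\binom{n-(r+1)/2}{2}+\epsilon rn$, one verifies (splitting into the cases $s\leq 5r$ and $s>5r$) that $x-y_{s}\geq c\bigl(\epsilon+(s-r-1)\bigr)n$ for some absolute constant $c>0$. Combined with $k/x\geq 1/2$ and an Appendix-style estimate of the form $\binom{y_{s}}{k}/\binom{x}{k}\leq\exp\bigl(-(k/x)(x-y_{s})\bigr)$, this yields $|X_{s}|/|X|\leq\exp\bigl(-\Omega(n(\epsilon+(s-r-1)))\bigr)$, and a geometric summation bounds the tail contribution to $\bar{r}$ by a quantity super-polynomially small in $n$, in particular less than $1-r/n$ for $n$ sufficiently large given $\epsilon$.

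The main technical obstacle will be verifying the gap $x-y_{s}\geq c(\epsilon+(s-r-1))n$ with an explicit absolute constant: the boundary case $s=r+1$ is delicate because the leading binomial terms $\binom{n-(r+1)/2}{2}$ and $\binom{n-s/2}{2}$ cancel exactly, and one must carefully calibrate $\epsilon'$ against $\epsilon$ to extract the required $\Omega(\epsilon n)$ surplus that drives the exponential decay of the tail.
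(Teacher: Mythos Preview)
Your approach is correct and shares the same skeleton as the paper's proof: double-count comparable pairs between $X$ and $\triangle(X)\cap\mathfrak{B}^{(k-1)}$, then use Lemma~\ref{numof2conn} to control the graphs with large $|R(G)|$. The execution differs in one place worth noting. You compute the exact average $\bar r=|X|^{-1}\sum_G |R(G)|$ and then bound the tail $\sum_{s>r} s\,|X_s|$ term by term, which forces you into the gap estimate $x-y_s\ge c(\epsilon+(s-r-1))n$ and a geometric summation. The paper instead simply \emph{discards} the bad set $\{G\in X:|R(G)|\ge r+1\}$, whose size $a'$ is bounded by a single invocation of Lemma~\ref{numof2conn}, and uses the crude degree bound $k-r$ on the remaining graphs to obtain $(k-r)(|X|-a')\le (m-k+1)|U|$. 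This bypasses the entire tail analysis: no case split on $s$, no ratio estimate $\binom{y_s}{k}/\binom{x}{k}$, no summation---just the single comparison $8n^3 a'<|X|$, which follows immediately from the hypothesis on $x$ and Appendix~(\ref{app2}). Your route works, but the paper's discard-and-bound trick is considerably cleaner and avoids precisely the ``main technical obstacle'' you identified. One small caveat on your version: Lemma~\ref{numof2conn} is stated only for $r\le n$, whereas your tail runs up to $s\le 2n-2$; you should either note that the proof of that lemma extends to this range, or handle $n<s\le 2n-2$ by a separate crude bound.
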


\begin{proof} Define $U=\Delta(X)\cap\B^{(k-1)}$ and let $B$ be the bipartite graph with vertex partition ($X,U$), the edges being 
the comparable pairs. Every element of $U$ has degree at most $m-k+1$ in $B$. Also, the degree of a graph $G$ in $X$ is exactly $k-|R(G)|$ in 
$B$. Let $a$ be the number of graphs in $X$ with degree at most $k-r-1$ and let $a'$ be the number of graphs in $\B^{(k)}$ with $|R(G)|\geq r
+1$.  Then $a<a'$ and by Lemma \ref{numof2conn}, we have $a'<\binom{\binom{n-(r+1)/2}{2}+\epsilon rn/2}{k}$, provided $n>n_{1}(\epsilon/2)$. 
Moreover, we have the following bounds on the number of edges of $B$:

$$(k-r)(|X|-a')\leq e(B)\leq(m-k+1)|U|.$$
Hence,
$$\frac{|U|}{|X|-a'}\geq\frac{k-r}{m-k+1}.$$
Here, $k\geq m/2+1$, so
$$\frac{|U|}{|X|-a'}\geq \frac{m/2-r+1}{m/2}\geq 1-\frac{4r-4}{n(n-1)}.$$
If $|X|>8n^{3}a'$, we get $\frac{|U|}{|X|}\geq 1-\frac{4r}{n^{2}}$, using that $r\leq n-1$. But note that if $n$ is sufficiently large given 
$\epsilon$, then  $8n^{3}< \mathrm{exp}_2(\epsilon n/3)$, which means that $$8n^{3}a'<\binom{\binom{n-(r+1)/2}{2}+\epsilon rn/2}{k} \cdot 
\mathrm{exp}_2(\epsilon n/3) < \binom{\binom{n-(r+1)/2}{2}+\epsilon rn}{k}<\binom{x}{k},$$
where the second inequality is a consequence of \ref{app2} from Lemma \ref{binom}.  \end{proof}

\bigskip

We remark that we do not have to consider the case when $r\geq n$. If $|X|=\binom{x}{k}\geq 1$, then $x\geq k$, and we can always find $r<n$ satisfying $x>\binom{n-(r+1)/2}{2}+\epsilon rn$. This remark holds true for the upcoming lemmas as well.

In the next lemma, we show that if $X\subset \C^{(k)}$  is a set of non-$2$-edge-connected graphs, then the size of the shadow of $X$ is  
slightly larger than $|X|$.

\begin{lemma}\label{exp2}
Let $\epsilon$ be a positive real number such that $\epsilon<1/2$. There exists $n_{4}(\epsilon)$ such that if $n>n_{4}(\epsilon)$, the 
following holds. Let $k$ be a positive integer with $M<k<M+n$ and let $X\subset \C^{(k)}\setminus\B^{(k)}$. Let $|X|=\binom{x}{k}$ and let $r
$ be a positive integer such that $r<n$ and $x>\binom{n-(r+1)/2}{2}+\epsilon rn$. Then
$$\frac{|\Delta(X)|}{|X|}>1+\frac{4-4r/n}{n}.$$
\end{lemma}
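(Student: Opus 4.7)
The plan is to apply a bipartite double-counting argument between $X$ and $\triangle(X)$, following the strategy of Lemma \ref{exp1} but adapted to non-$2$-edge-connected graphs. Let $B$ be the bipartite graph with parts $X$ and $\triangle(X)$, with edges the comparable pairs. If $G\in X$ has skeleton $(B_G,\{A_1,\ldots,A_{t_G}\})$ with $t_G\geq 2$, then removing any non-bridge edge preserves connectivity and all of $G$'s bridges, so $\triangle(X)\subset\G^{(k-1)}\setminus\B^{(k-1)}$ and $\deg_B(G)=k-(t_G-1)$. The degree of any $H\in\triangle(X)$ in $B$ is at most $m-k+1$, which I will sharpen by subtracting the non-edges $e$ for which $H+e$ becomes $2$-edge-connected (and thus lies outside $X$).

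The first sub-step is to isolate and discard the exceptional subset $X'=\{G\in X:t_G\geq r+1\}$. By an argument parallel to that of Lemma \ref{numof2conn}, obtained by applying Lemma \ref{squares} to block sizes $a_1,\ldots,a_t$ together with a factor counting possible skeleton trees, $|X'|\leq\binom{\binom{n-(r+1)/2}{2}+\epsilon rn/2}{k}$; this is strictly less than $|X|$ by the hypothesis on $x$, so the bulk of $X$ satisfies $\deg_B(G)\geq k-r+1$. The second sub-step is to derive a structural improvement on the upper bound for $\deg_B(H)$ using Lemma \ref{tech}, which applies to every $H\in\triangle(X)$ except a small exceptional class (single bridge to an isolated vertex), handled via a separate Lemma \ref{disc}-style enumeration. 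The idea is that the quantity $\sum_{i<j}|A_i^H||A_j^H|-2(s_H-1)-\sum_i|R(H[A_i^H])|\geq n$ from Lemma \ref{tech} controls the number of non-edges $e$ for which $H+e$ is $2$-edge-connected and hence not in $X$, giving an average reduction of order $n$ in the bound on $\deg_B(H)$.

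Combining, the double count reads
\[
(k-r+1)\bigl(|X|-|X'|\bigr)\leq\bigl((m-k+1)-n\bigr)|\triangle(X)|+\text{small exceptions},
\]
and since $k>M$ ensures $k-r+1-((m-k+1)-n)\geq n$, rearranging yields the required $|\triangle(X)|/|X|>1+(4-4r/n)/n$. The hard part is the conversion of Lemma \ref{tech} into a sharp lower bound on the number of $e\notin E(H)$ making $H+e$ become $2$-edge-connected: since $H+e$ is $2$-edge-connected only when the endpoints of $e$ lie in the two leaf blocks of a path-shaped skeleton tree, the structural sum of Lemma \ref{tech} translates into the required count only after a careful case analysis on the shape of $Sk(H)$; handling skeletons that are not path-shaped is likely the subtle point, and the exceptional families on both sides must be absorbed using counts parallel to those in Section 2.
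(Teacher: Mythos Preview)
Your double-counting framework has a genuine gap: the attempt to sharpen the upper bound on $\deg_B(H)$ by subtracting the number of non-edges $e$ with $H+e\in\B^{(k)}$ does not work, and Lemma~\ref{tech} does not control that quantity. Concretely, $H+e$ is $2$-edge-connected if and only if the skeleton tree of $H$ is a path and the endpoints of $e$ lie in the two leaf blocks; hence whenever $Sk(H)$ is \emph{not} a path (e.g.\ a star with three leaves), this count is exactly zero, and your refined upper bound collapses back to $m-k+1$. Even when the tree is a path, the count is only $|A_1|\,|A_t|$, which may equal $1$. The expression $\sum_{i<j}|A_i||A_j|-2(t-1)-\sum_i|R(H[A_i])|$ from Lemma~\ref{tech} is simply a different number; it is not a lower bound for the set of $e$ making $H+e$ two-edge-connected. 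With only $\deg_B(H)\le m-k+1$ available, the inequality $(k-r+1)(|X|-|X'|)\le (m-k+1)|\triangle(X)|$ yields a ratio near $1$, not $1+\Theta(1/n)$.

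The paper's proof avoids this by a different choice of bipartite graph: it joins $G\in X$ to $H\in\triangle(X)$ only when $H<G$ \emph{and} $Sk(H)=Sk(G)$, and then partitions the count skeleton by skeleton. Under this restriction the degree of $H\in U(\T)$ drops to $u_\T=m-k+1-\sum_{i<j}|A_i||A_j|+(t-1)$ (since any new edge crossing between blocks changes the skeleton), while the degree of $G\in X(\T)$ drops to $x_\T=k-(t-1)-\sum_i|R(G[A_i])|$ (since removing an edge in $R(G[A_i])$ also changes the skeleton). The point is that $x_\T-u_\T$ is precisely the left side of Lemma~\ref{tech}, so one obtains $|U(\T)|/|X(\T)|\ge 1+4/n$ directly for every non-degenerate $\T$. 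The degenerate case $t=2$, $\min\{|A_1|,|A_2|\}=1$ is then handled separately using Lemma~\ref{numof2conn} applied to $R(G[A_2])$, not by bounding the number of blocks as in your $X'$. So the missing idea is the skeleton-preserving restriction on the bipartite graph; without it the use of Lemma~\ref{tech} cannot be made to go through.
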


\begin{proof}
Define the bipartite graph $B$ between $X$ and $U=\Delta(X)$ as follows. Let $G\in X$ and $H\in \Delta(X)$ be connected by an edge if $H<G$ 
and $Bt(G)=Bt(H)$. If $T=(C,\{A_{1},...,A_{t}\})$ is the block tree of some graph, let $X(T)$ be the set of graphs in $X$ with 
block tree $T$, and define $U(T)$ similarly. Let $B(T)$ be the bipartite subgraph of $B$ induced on $X(T)\cup U(T)$, and let us estimate $|
U(T)|/|X(T)|$. If $H\in U(T)$ and $e\in [n]^{(2)}\setminus E(H)$ is an edge connecting $A_{i}$ and $A_{j}$ with $i\neq j$, then the block 
tree of $H'=H\cup \{e\}$ differs from $T$. Hence, the degree of $H$ in this bipartite graph is at most
$$u_{T}=m-k+1-\sum_{1\leq i<j\leq t}|A_{i}||A_{j}|+t-1.$$
Note that the term  $t-1$ corresponds to the number of edges in $C$. Now let $G\in X(T)$ and $e\in E(G)$. We have $Bt(G-e)=T$ if and only if $e\in G[A_{i}]\setminus R(G[A_{i}])$ for some $i\in [t]$.  Hence, the degree of $G$ in $B(T)$ is
$$x_{T}(G)=k-\sum_{i=1}^{t}|R(G[A_{i}])|-(t-1).$$

Suppose that $t\geq 3$ or $\min\{|A_{1}|,|A_{2}|\}>1$. Then by Lemma \ref{tech}, we have

$$x_{T}(G)-u_{T}= \sum_{1\leq i<j\leq t}|A_{i}||A_{j}|-2(t-1)-\sum_{i=1}^{t}|R(G[A_{i}])|\geq n.$$
Setting $x_{T}=u_{T}+n$, we have $x_{T}(G)\geq x_{T}$.

Bounding the edges of $B(T)$ in two different ways, we get

$$|X(T)|x_{T}\leq e(B(T))\leq|U(T)|u_{T}.$$
We now consider the remaining case, when $t=2$ and $\min\{|A_{1}|,|A_{2}|\}=1$. Note that we need not consider the case $t=1$ as $T$ is not the block tree of a $2$-edge-connected graph. Without loss of generality, let $|A_{1}|=1$. We have $u_{T}\leq M-(n-2)$, while $x_{T}(G)\geq M-|R(G[A_{2}])|$ for every $G\in X(T)$. Let $a$ be the 
number of graphs $G$ in $X(T)$ with $|R(G[A_{2}])|\geq r+3$. By Lemma \ref{numof2conn}, we have $$a<\binom{\binom{(n-1)-(r+3)/2}{2}+\epsilon 
rn/2}{k},$$ if $n>n_{1}(\epsilon/2)$.

Counting the number of edges of $B(T)$ two ways, we get the following bounds:

$$(|X(T)|-a)(M-(r+2))\leq e(B(T)) \leq (M-(n-2))|U(T)|.$$

Hence,

$$ \frac{|U(T)|}{|X(T)|-a}\geq\frac{M-(r+2)}{M-(n-2)}=1+\frac{(n-r)}{M-(n-2)}>1+\frac{(4n-4r)}{n(n-1)}.$$

If $|X(T)|>2n^{3}a$, this implies $\frac{|U(T)|}{|X(T)|}\geq 1+\frac{4n-4r}{n^{2}-1}.$

Let $\mathcal{T}_{0}$ be the set of pairs $T=(C,\{A_{1},A_{2}\})$ satisfying the following conditions: $T$ is the block tree of some graph in $\C$, $|
A_{1}|=1$, and $|X(T)|\leq 2n^3a$. Let $X_{0}=\bigcup_{T\in \mathcal{T}_{0}}X(T)$. Note that $|\mathcal{T}_{0}|<n^{2}$ as we have at most $n$ choices for $A_{1}$ 
and at most $n-1$ choices for the one edge in $C$. Hence, we have $|X_{0}|\leq 2n^{5}a$. This gives the following bound on the size of $
\Delta(X)$.

$$|\Delta(X)|\geq \left(1+\frac{4n-4r}{n^{2}-1}\right)(|X|-|X_{0}|)\geq $$
$$\geq \left(1+\frac{4n-4r}{n^{2}-1}\right)|X|-4n^{5}\binom{\binom{(n-(r+1)/2}{2}+\epsilon rn/2}{k}.$$

Therefore, if $|X|\geq 4n^{9}\binom{\binom{(n-(r+1)/2}{2}+\epsilon rn/2}{k}$, then
$$\frac{|\Delta(X)|}{|X|}\geq 1+\frac{4n-4r}{n^{2}}.$$
But if $n$ is sufficiently large given $\epsilon$, we have
$$4n^{9}\binom{\binom{(n-(r+1)/2}{2}+\epsilon rn/2}{k}<\binom{\binom{(n-(r+1)/2}{2}+\epsilon rn}{k}\leq |X|.$$
\end{proof}

\bigskip

In the next lemma, we show that if the number of $2$-edge-connected graphs in $X$ is not in the same range as the number of non-$2$-edge-connected graphs in $X$, then $|X|<|\Delta(X)|$.

\begin{lemma}\label{diffsizes}
There exists $n_{5}$ such that if $n>n_{5}$, the following holds. Let $M+1\leq k<M+n$, $X\subset \C^{(k)}$ and $Y=X\cap \B$, $Z=X-Y$. Suppose 
that $|Z|>n|Y|$ or $|Y|>n|Z|$. Then $|\Delta(X)|>|X|$.
\end{lemma}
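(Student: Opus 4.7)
The strategy is to invoke $|\triangle(X)|\geq\max\{|\triangle(Y)|,|\triangle(Z)|\}$ and to exploit that whenever one of $|Y|,|Z|$ exceeds $n$ times the other, $|X|$ differs from the dominant piece by at most a factor of $1+1/n$: if $|Y|>n|Z|$ then $|X|=|Y|+|Z|<(1+1/n)|Y|$, and symmetrically when $|Z|$ dominates. So it suffices to show that the shadow of the dominant piece exceeds $1+1/n$ times its own size.

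Consider first the case $|Z|>n|Y|$. Since $Z\subseteq\G^{(k)}\setminus\B^{(k)}$ consists of non-$2$-edge-connected graphs, Lemma \ref{exp2} applies. Writing $|Z|=\binom{z}{k}$, it gives $|\triangle(Z)|/|Z|>1+(4-4r/n)/n$ for any positive integer $r<n$ with $z>\binom{n-(r+1)/2}{2}+\epsilon rn$. I take $r=\lfloor 3n/4\rfloor$, which ensures $(4-4r/n)/n\geq 1/n$. The hypothesis on $z$ then becomes, to leading order, $z>(25/128)n^{2}+(3\epsilon/4)n^{2}$, which is easily satisfied because $z\geq k\geq M+1\sim n^{2}/4=(32/128)n^{2}$ once $\epsilon$ is chosen small (e.g., $\epsilon=1/16$ gives $(25/128)+(3/64)=31/128<1/4$). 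Therefore $|\triangle(X)|\geq|\triangle(Z)|>(1+1/n)|Z|>|X|$.

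Next consider $|Y|>n|Z|$. The key point is that $Y\subseteq\B^{(k)}$ forces $\triangle(Y)=\partial(Y)$, since removing any single edge from a $2$-edge-connected graph keeps it connected. Write $|Y|=\binom{y}{k}$ and $|X|=\binom{x}{k}$. If $y>\binom{n-1}{2}+\epsilon n$, monotonicity of $t\mapsto\binom{t}{k}$ gives $x\geq y>\binom{n-1}{2}+\epsilon n$, and Lemma \ref{largeX} applied directly to $X$ yields $|\triangle(X)|>|X|$. Otherwise $y\leq\binom{n-1}{2}+\epsilon n$, and Lemma \ref{lovasz} combined with $\triangle(Y)=\partial(Y)$ yields
$$\frac{|\triangle(Y)|}{|Y|}\;\geq\;\frac{k}{y-k+1}.$$
Using $2k\geq m+2=\binom{n-1}{2}+n+1$ one obtains $2k-y-1\geq(1-\epsilon)n$, and using $M\geq m/2$ one gets $y-k+1\leq(n-1)(n-4)/4+\epsilon n\leq n^{2}/4$, so the ratio is at least $1+4(1-\epsilon)/n$, which exceeds $1+1/n$ as soon as $\epsilon<3/4$. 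Hence $|\triangle(Y)|>(1+1/n)|Y|>|X|$.

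The main obstacle is conceptual rather than computational: in the $|Y|$-dominated regime Lemma \ref{exp2} is of no direct use (it concerns the non-$2$-edge-connected part), so the proof must fall back on Lovász's lemma applied to $\partial(Y)$, and this is exactly the step where the identity $\triangle(Y)=\partial(Y)$ becomes indispensable. A single small choice of $\epsilon$, for instance $\epsilon=1/16$, serves all subcases simultaneously, and $n_{5}$ is defined as the maximum of $n_{2}(\epsilon)$, $n_{4}(\epsilon)$, and a modest absolute constant absorbing the lower-order terms above.
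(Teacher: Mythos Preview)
Your proof is correct and follows essentially the same approach as the paper: bound $|\triangle(X)|$ below by the shadow of the dominant piece, using Lov\'asz's lemma (together with $\triangle(Y)=\partial(Y)$) when $Y$ dominates and Lemma~\ref{exp2} with a suitable $r$ when $Z$ dominates. The only differences are cosmetic---the paper first disposes of large $|X|$ via Lemma~\ref{largeX} and then bounds $y$ by $m-n/2$, whereas you split on the size of $y$ directly; and the paper takes $\epsilon=1/40$, $r=\lceil 2n/3\rceil$ while you use $\epsilon=1/16$, $r=\lfloor 3n/4\rfloor$.
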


\begin{proof} If $|X|\geq\binom{m-n/2}{k}$, we are done by Lemma \ref{largeX}. So we can suppose that $|X|<\binom{m-n/2}{k}$.

Firstly, consider the case when $|Y|>n|Z|$. Let $Y=\binom{y}{k}$, then $y<m-n/2$. As $\partial(Y)=\Delta(Y)$, we can apply Lemma \ref{lovasz} 
to get
$$\frac{|\Delta(Y)|}{|Y|}>\frac{k}{m-n/2-k}\geq 1+\frac{2}{n}.$$
Hence, $|\Delta(X)|\geq |\Delta(Y)|>|Y|+2|Y|/n>|Y|+|Z|.$

Now consider the case when $|Z|>n|Y|$. Let $|Z|=\binom{z}{k}$, then $z\geq k=n^{2}/4+O(n)$.

Set $\epsilon=1/40$ and $r=\lceil 2n/3\rceil$. We choose $r$ and $\epsilon$ such that $r<n$ and $z>\binom{n-(r+1)/2}{2}+\epsilon rn$ holds. Hence, by 
Lemma  \ref{exp2}, we have
$$\frac{|\Delta(Z)|}{|Z|}\geq 1+\frac{4-4r/n}{n}\geq 1+\frac{4}{3n},$$
for $n$ sufficiently large. Estimating the size of the shadow of $X$ with $|\Delta(Z)|$, we get

$$|\Delta(X)|\geq |\Delta(Z)|\geq |Z|+\frac{4|Z|}{3n}\geq |Z|+|Y|=|X|.$$

\end{proof}

\bigskip

We also need the following technical lemma, which tells us what conditions need to be satisfied for the sizes of the shadows of $Y,Z$ to have $|X|<|\Delta(X)|$.

\begin{lemma}\label{technical}
Let $a,b,c_{1},c_{2},c_{3}$ be positive real numbers and  $A=a(1+c_{1})$, $B=b(1+c_{2})$ and $C=a(1-c_{3})$. If $c_{3}
\leq c_{1}c_{2}$, then $$a+b\leq C+\max\{B,A-C\}.$$
\end{lemma}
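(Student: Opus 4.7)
The plan is to prove this entirely by elementary algebra, exploiting that
\[
A_{1}+\max\{B,\;A_{2}-A_{1}\}=\max\{A_{1}+B,\;A_{2}\}.
\]
Thus it suffices to show that at least one of the two inequalities $a+b\leq A_{1}+B$ or $a+b\leq A_{2}$ holds; whichever one it is, the desired bound follows.

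Substituting the defining expressions (reading $B=b(1+c_{3})$ in the natural way), the first candidate inequality $a+b\leq a(1-c_{1})+b(1+c_{3})$ rearranges to $a c_{1}\leq b c_{3}$, while the second candidate $a+b\leq a(1+c_{2})$ rearranges to $b\leq a c_{2}$. So the task reduces to checking that under the hypothesis $c_{1}\leq c_{2}c_{3}$, at least one of these two simple inequalities must hold.

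I would finish with a two-case split. If $b\leq a c_{2}$, the second inequality holds immediately. Otherwise $b>a c_{2}$, in which case multiplying by $c_{3}>0$ and invoking the hypothesis gives
\[
b c_{3}\;>\;a c_{2}c_{3}\;\geq\;a c_{1},
\]
so the first inequality holds instead. Either way, $a+b\leq A_{1}+\max\{B,\,A_{2}-A_{1}\}$, completing the argument.

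There is no real obstacle: the lemma is a piece of algebraic bookkeeping designed to be plugged into the proof of Theorem~\ref{mainthm}, where $a,b$ play the roles of $|Y|,|Z|$ and the two quantities $A_{1}+B$ and $A_{2}$ correspond to the two different ways of lower-bounding $|\triangle(X)|$ (one via the shadow of $Z$, one via the $2$-edge-connected part of the shadow of $Y$). The only real content is the observation that writing $\max\{B,A_{2}-A_{1}\}$ in the conclusion lets us pick, for each pair $(a,b)$, whichever of the two competing estimates happens to be the tighter.
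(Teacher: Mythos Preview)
Your argument is correct. The paper's proof arrives at the same algebraic content but packages it differently: rather than splitting on whether $b\le ac_{2}$, it argues that one may assume without loss of generality that $B=A_{2}-A_{1}$ (since moving $b$ to the boundary can only make the inequality harder), and then the single remaining inequality simplifies directly to $c_{1}\le c_{2}c_{3}$. Your case split is arguably cleaner and avoids the reduction step, but the two proofs are essentially equivalent elementary verifications of the same algebraic fact.
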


\begin{proof} We need to show that $ac_{3}+b<\max\{B,A-C\}$. Observe that we can suppose that $B=A-C$. Otherwise, if $B<A-C$, we 
can substitute $b$ with $b'>b$ ,and $B$ with $B'=b'(1+c_{3})$, satisfying $B'=A-C$. Then the left hand side of the inequality increases, while the right hand side does not change. We can proceed similarly if $A-C<B$.

If $B=A-C$, then $b=\frac{c_{1}+c_{3}}{1+c_{2}}a$. Hence, our inequality becomes

$$ac_{3}+\frac{c_{1}+c_{3}}{1+c_{2}}a\leq (c_{1}+c_{3})a.$$
Simplifying this inequality, we get that it is equivalent with $c_{3}\leq c_{1}c_{2}.$\end{proof}

\bigskip

Now we are ready to show the existence of a complete matching between the levels close to the middle level.

\begin{theorem}\label{middlelevels}
There exists $n_{6}$ such that if $n>n_{6}$, the following holds. If $M+1\leq k<M+n$, then there exists a complete matching from $\C^{(k)}$ 
to $\C^{(k-1)}$.
\end{theorem}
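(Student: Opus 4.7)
The plan is to verify Hall's criterion for the bipartite graph of comparabilities between $\G^{(k)}$ and $\G^{(k-1)}$: show $|\triangle(X)| \geq |X|$ for every $X \subset \G^{(k)}$. I would split $X = Y \cupdot Z$ with $Y = X \cap \B^{(k)}$ and $Z = X \setminus \B^{(k)}$. The first key observation is that $\triangle(Z) \subset \G^{(k-1)} \setminus \B^{(k-1)}$, because every $G \in Z$ contains a bridge $f$ and deleting any other edge keeps $f$ a bridge of the resulting graph. Therefore $\triangle(Y) \cap \B^{(k-1)}$ and $\triangle(Z)$ are disjoint subsets of $\triangle(X)$, which gives
$$|\triangle(X)| \geq |\triangle(Y) \cap \B^{(k-1)}| + \max\bigl\{|\triangle(Y)| - |\triangle(Y) \cap \B^{(k-1)}|,\ |\triangle(Z)|\bigr\}.$$

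Fix a small constant $\epsilon > 0$, say $\epsilon = 1/50$. I would first dispose of the extreme regimes: Lemma \ref{largeX} settles the case $|X| \geq \binom{\binom{n-1}{2}+\epsilon n}{k}$, while Lemma \ref{diffsizes} handles the case where $|Y|$ and $|Z|$ differ by more than a factor of $n$. So one may assume $|X| < \binom{\binom{n-1}{2}+\epsilon n}{k}$ together with $|Y|/n \leq |Z| \leq n|Y|$; in particular, $|Y|, |Z| \geq |X|/(n+1)$.

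In this main regime, write $|Y| = \binom{y}{k}$ and $|Z| = \binom{z}{k}$, and choose $r$ to be the smallest positive integer satisfying $\min(y,z) > \binom{n-(r+1)/2}{2} + \epsilon r n$ (this $r$ exists and lies in $[1, n-1]$ for $\epsilon < 1/2$, since the right-hand side is decreasing in $r$ and eventually falls below $k$). I would then apply three lemmas: Lemma \ref{exp1} gives $|\triangle(Y) \cap \B^{(k-1)}| \geq (1-c_1)|Y|$ with $c_1 = 4r/n^2$; Lemma \ref{exp2} gives $|\triangle(Z)| \geq (1+c_3)|Z|$ with $c_3 = (4-4r/n)/n$; and since $\triangle(Y) = \partial(Y)$ (removing any edge from a 2-edge-connected graph leaves it connected), Lemma \ref{lovasz} gives $|\triangle(Y)| \geq (1+c_2)|Y|$ with $c_2 = (2k-y-1)/(y-k+1)$. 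Plugging these into the displayed inequality, Lemma \ref{technical} with $a = |Y|,\ b = |Z|$ reduces the goal $|\triangle(X)| \geq |X|$ to verifying the single inequality $c_1 \leq c_2 c_3$.

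The hard part will be checking $c_1 \leq c_2 c_3$, which rearranges algebraically to $y \leq 2k - 1 - rk/n$. The minimality of $r$, together with the fact that $|Y|/|Z| \leq n$ forces $y$ and $z$ to differ by at most $O(\log n)$, links $r$ to $y$ via $y \leq \binom{n-r/2}{2} + \epsilon(r-1)n + O(\log n)$. Combined with $2k - 1 \geq \binom{n}{2} + 1$ and $k \leq n^2/4 + n$, a routine computation then confirms $y \leq 2k - 1 - rk/n$ throughout the admissible range of $r$ (from $r = 2$ when $y$ is near $\binom{n-1}{2}+\epsilon n$ up to $r$ of order $0.6 n$ when $y$ is near $k$), provided $\epsilon < 1/4$ and $n$ is sufficiently large. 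Lemma \ref{technical} then yields $|\triangle(X)| \geq |X|$, and Hall's theorem delivers the desired matching.
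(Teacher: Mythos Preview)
Your proposal is correct and follows essentially the same approach as the paper: split $X=Y\cupdot Z$, dispose of the extreme regimes via Lemmas~\ref{largeX} and~\ref{diffsizes}, then in the main regime invoke Lemmas~\ref{exp1}, \ref{lovasz}, \ref{exp2} to produce constants $c_1,c_2,c_3$ and reduce via Lemma~\ref{technical} to the inequality $c_1\le c_2c_3$, which is verified by a direct computation. Your explicit justification that $\triangle(Z)\cap\B^{(k-1)}=\emptyset$ (any bridge of $G\in Z$ remains a bridge after deleting another edge) is a point the paper uses without comment, and your choice to parametrize $r$ via $\min(y,z)$ rather than $x$ and to rearrange $c_1\le c_2c_3$ as $y\le 2k-1-rk/n$ are cosmetic variations on the paper's argument (which fixes $\epsilon=1/18$ and reduces to $r/n+18\epsilon\le 2$).
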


\begin{proof} By Hall's theorem, it is enough to prove that for any $X\subset \C^{(k)}$, we have $|X|\leq |\Delta(X)|$. Fix $\epsilon=1/18$. 
Let $|X|=\binom{x}{k}$. By Lemma \ref{largeX}, if $x>\binom{n-1}{2}+\epsilon n$, then we are done if $n>n_{2}(\epsilon)$. Now suppose that $x\leq 
\binom{n-1}{2}+\epsilon n$. Let $Y=X\cap \B$ and $Z=X-Y$.
Let $|Y|=\binom{y}{k}$, $|Z|=\binom{z}{k}$, and suppose that $n>n_{5}$. By Lemma \ref{diffsizes}, if  $|Y|>n|Z|$ or $|Z|>n|Y|$, we are done. 
Hence, we can suppose that
$x-\epsilon n<y,z\leq x,$
if $n$ is sufficiently large.

Let $U=\Delta(Y)\cap \B$ and let $r$ be a positive integer satisfying 
$$\binom{n-(r+1)/2}{2}+\epsilon (r+1)n\leq x<\binom{n-r/2}{2}+\epsilon rn$$
One can easily check that as $k\leq x <\binom{n-1}{2}+\epsilon$ and $\epsilon<1/4$, such an $r$  always exists, it is unique, and $r<n$. Furthermore, 
$y,z>\binom{n-(r+1)/2}{2}+\epsilon rn$.

By Lemma \ref{exp1}, if $n>n_{3}(\epsilon)$, we have
$$\frac{|U|}{|Y|}>1-\frac{4r}{n^{2}}.$$
Also, by  Lemma \ref{lovasz}
$$\frac{|\Delta(Y)|}{|Y|}\geq\frac{k}{y-k+1}>\frac{k}{\binom{n-r/2}{2}+2\epsilon rn-k},$$
where the term $2\epsilon rn$ comes from bounding $1+\epsilon rn$ above by $2\epsilon rn$. Using that $k>m/2$, we have

$$\frac{k}{\binom{n-r/2}{2}+2\epsilon rn-k}>\frac{m/2}{\binom{n-r/2}{2}+2\epsilon rn-m/2}=$$
$$=\frac{1}{1-r(2n-1)/n(n-1)+r^{2}/2n(n-1)+8\epsilon r/(n-1)}>$$
$$>\frac{1}{1-2r/n+r^{2}/2n^{2}+9\epsilon r/n},$$
where the last inequality holds if $n$ is sufficiently large.

Finally, by Lemma \ref{exp2}, if $n>n_{4}(\epsilon)$, we have

$$\frac{|\Delta(Z)|}{|Z|}>1+\frac{4-4r/n}{n}.$$

Now we are ready to estimate $|\Delta(X)|$. We have $$\Delta(X)=U \cupdot ((\Delta(Y)\setminus U)\cup\Delta(Z)),$$ where $\cupdot$ denotes 
disjoint union.
Hence,
$$|\Delta(X)|\geq|U|+\max\{|\Delta(Y)|-|U|,|Z|\}.$$

\begin{figure}
\centering
 \includegraphics[scale=1]{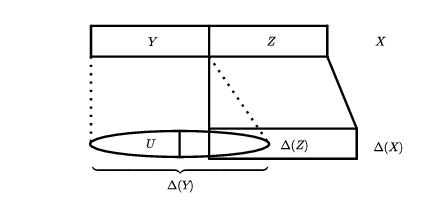}
 \caption{The comparability graph between $X$ and its shadow}
 \label{image2}
 \end{figure}

Also, $|X|=|Y|+|Z|$. Let $c_{1}=\frac{2r/n-r^{2}/2n^{2}-9\epsilon r/n}{1-2r/n+r^{2}/2n^{2}+9\epsilon}$, $c_{2}=\frac{4-4r/n}{n}$ and $c_{3}=4r/n^{2}$. We have  $|\Delta(Y)|>(1+c_{1})|Y|$, $|\Delta(Z)|>(1+c_{2})|Z|$ and $|U|>(1-c_{3})|Y|$. Hence, by Lemma \ref{technical}, 
our task is reduced to proving that $c_{3}\leq c_{1}c_{2}$. Namely,

$$\frac{4r}{n^{2}}\leq \frac{2r/n-r^{2}/2n^{2}-9\epsilon r/n}{1-2r/n+r^{2}/2n^{2}+9\epsilon r/n}\frac{4-4r/n}{n}.$$
Simplifying this inequality, we get
$$1-2r/n+r^{2}/2n^{2}+9\epsilon r/n\leq (2-r/2n-9\epsilon)(1-r/n).$$
For our convenience, let $\alpha=r/n$. Then the previous inequality can be written as
$$1-2\alpha+\alpha^{2}/2+9\epsilon\alpha \leq (2-\alpha/2-9\epsilon)(1-\alpha),$$
which reduces to

$$\alpha+18\epsilon \leq 2.$$
As $\alpha<1$ and $\epsilon=1/18$, this inequality holds. Hence, if $n$ is sufficiently large, we have $|X|\leq|\Delta(X)|$. \end{proof}

\bigskip

We are now ready to prove our main theorem.

\bigskip

\textit{Proof of Theorem \ref{mainthm}.} Let $n>n_{6}$, where $n_{6}$ is the constant given in Lemma \ref{middlelevels}. By Lemma 
\ref{trivial}, it is enough to prove that for $k=1,...,M-1$ there is a complete matching from $\C^{(k)}$ to $\C^{(k+1)}$, and for $k=M
+1,...,m$, there is a complete matching from $\C^{(k)}$ to $\C^{(k-1)}$. But we proved exactly this statement in Lemma \ref{smalllevels}, 
Lemma \ref{largelevels} and Theorem \ref{middlelevels}.
\newline
\hspace*{\fill}\hfill$\Box$

As a final remark, we observe that the proof also shows that $\C^{(M)}$ is the unique largest antichain, as the strict inequality $|\Delta(X)|>|X|$ holds.
\section{Open problems}\label{sect:problems}

In this section, we propose several open problems.

The first problem we propose is inspired by the question investigated in \cite{induced}, which we mentioned in the Introduction. Let $G$ be a 
connected graph and let $C'(G)$ be the family of subgraphs of $G$ that are connected on the vertex set $V(G)$. Define the partial ordering $<$ on $C'(G)$ as usual: $H<H'$ if $E(H)\subset E(H')$.

\begin{question}\label{open:question1}
Let $G$ be a connected graph. Is $(C'(G),<)$ Sperner?
\end{question}

We believe that there should be graphs $G$ for which $(C'(G),<)$ is not Sperner. Unfortunately, even for small graphs, it is difficult to check this property.

We also propose another variation of Question \ref{question}. Let $GP$ be a monotone graph property (a family of graphs closed under isomorphism, and adding edges) and 
let $GP_{n}$ denote the family of graphs in $GP$ with vertex set $[n]$. Also, for $k=0,...,\binom{n}{2}$ let $GP_{n}^{(k)}$ be the set of 
graphs in $GP_{n}$ with $k$ edges. Define the partial ordering $<$ on $GP_{n}$ as usual. The poset $(GP_{n},<)$ might not be graded, however 
it still makes sense to ask the following question. For which graph properties $GP$ is it true that the largest antichain in $(GP_{n},<)$ is 
$GP_{n}^{(k)}$ for some $k$? To ask a more specific question, we propose the following problem.

\begin{question}
Let $H$ be the family of Hamiltonian graphs. Is $(H_{n},<)$ Sperner?
\end{question}

Finally, we suggest the following variation of Question \ref{question}. Suppose we do not distinguish graphs that are isomorphic. More 
precisely, define the equivalence relation $\sim$ on $\C$ such that $G\sim H$ if $G$ and $H$ are isomorphic, and let $\C_{0}$ be the set of 
equivalence classes of $\C$. Define $<$ on $\C_{0}$ such that for $\widetilde{G},\widetilde{H}\in \C_{0}$ we have 
$\widetilde{G}<\widetilde{H}$ if there exists $G\in \widetilde{G}$ and $H\in\widetilde{H}$ satisfying $G<H$ in $(\C,<)$.

\begin{question}\label{open:question2}
Is $(\C_{0},<)$ Sperner?
\end{question}

\section{Acknowledgements}

We would like to thank the anonymous referees for their useful comments and suggestions, and Andrew Thomason for drawing our attention to the simple proof presented in Claim \ref{multigraph}.




\end{document}